\newcommand{\dist}{\mathrm{dist}}
\newcommand{\NmuD}{{\mathrm{N}\mu\mathrm{D}}}
\newcommand{\wxi}{{\widetilde{\xi}}}
\newcommand{\wgamma}{{\widetilde{\gamma}}}
\newcommand{\muD}{{\mu\mathrm{D}}}
\newcommand{\NPD}{\mathrm{NPD}}
\newcommand{\PD}{\mathrm{PD}}
\newcommand{\Si}{\Sigma}
\newcommand{\ED}{\mathrm{ED}}
\newcommand{\Id}{\mathrm{Id}}
\newcommand{\wmu}{\mathfrak{u}}
\newcommand{\wnu}{\mathfrak{v}}
\renewcommand{\P}{\mathrm{P}}
\renewcommand{\k}{\mathbf{k}}
\newcommand{\m}{\mathbf{m}}
\renewcommand{\a}{\hat{a}}
\renewcommand{\b}{\hat{b}}
\newcommand{\sgn}{\mathrm{sgn}}
\newcommand{\Dom}{\mathrm{Dom}}
\newtheorem{theorem}{Theorem}[section]
\newtheorem{lemma}[theorem]{Lemma}
\newtheorem{definition}[theorem]{Definition}
\newtheorem{corollary}[theorem]{Corollary}
\newtheorem{example}[theorem]{Example}
\newtheorem{notation}[theorem]{Notation}
\newtheorem{remark}[theorem]{Remark}
\numberwithin{equation}{section}
\title[]{A generalization of Siegmund's normal forms theorem to systems with $\mu$-dichotomies}
\author[]{\'Alvaro Casta\~neda and N\'estor Jara}
\address{Universidad de Chile, Departamento de Matem\'aticas. Casilla 653, Santiago, Chile}
\email{castaneda@uchile.cl, nestor.jara@ug.uchile.cl}
\subjclass[2020]{34C20, 37G05, 37C60, 37D25.}
\keywords{Nonautonomus hyperbolicity, Smooth linearization, Nonresonances}
\thanks{This research has been partially supported by FONDECYT Regular 1200653.}
\thanks{This research has been partially supported by ANID, Beca de Doctorado Nacional 21220105.}
\date{\today}
\begin{document}

\maketitle

\begin{abstract}
We establish a theorem concerning the normal forms by examining the newly presented concept of $\mu$-dichotomy. This work establishes the nonresonance condition based on the associated spectrum of this general nonautonomous hyperbolicity.
\end{abstract}

\section{Introduction}

Since the contributions of H. Poincar\'e \cite{Poincare} , there has been a significant focus on the dynamical systems of the normal forms.  This technique allows us to face the problem of linearizing a nonlinear system in the neighborhood of an equilibrium point; to address this question in a context of analytic linearization for analytic functions, the author introduces a condition referred to as the \textit{nonresonant condition} which essentially states that the eigenvalues of the linearization around the equilibrium point satisfy 
$\lambda_1\neq \sum_{j=1}^dm_j\lambda_j$, for all $m_1,\dots,m_d\in \mathbb{N}$ with $2\leq \sum_{j=1}^dm_j.$ Recall that in the case of $C^k$ vector fields, the $C^r$ linearization, with $1 \leq r \leq k \leq +\infty,$ was given by S. Sternberg in \cite{Sternberg1, Sternberg2} who regarded the criterion $2\leq \sum_{j=1}^dm_j \leq k$ instead of the one utilized by Poincar\'e. 

Until the end of the previous century, the focus of normal forms theory primarily revolved around autonomous differentiable systems. The core results of this theory may be found in a comprehensive manner on V.I. Arnold \cite[Chapter 5]{Arnold}, L. Stolovitch \cite{Stolovitch} and S. Wiggins \cite[Chapter 19]{Wiggins}.

\subsection{Nonautonomous Formal Norms} In \cite{Siegmund3}, S. Siegmund made a notable breakthrough by expanding on Poincaré's result using the spectrum associated with the exponential dichotomy, which may be interpreted as a kind of hyperbolicity in a nonautonomous context. Later, in \cite{Cuong}, the same author joint with L.V. Cuong and T.S. Doan extend the Sternberg Theorem to the context of nonautonomous differential systems. Notice that both previous work, the nonresonce condition is fashioned in terms of the Sacker-Sell spectrum (see \cite{Sacker, Siegmund}). 

In \cite{Zhang}, X. Zhang examines the nonuniform exponential dichotomy and establishes a normal form theorem inside this nonuniform framework, assuming nonresonance; this result is presented in terms of the spectrum of this dichotomy. Furthermore, it is important to note that in the work done by J. Chu \textit{et al.} \cite{Chu}, the spectrum of this nonautonomous hyperbolicity is also formulated.

\subsection{Structure and novelty of the article} In a recent study, C. Silva \cite{Silva} analizes the concept of $\mu$-dichotomy, which extends beyond prior dichotomies. Additionally, the author constructs the spectrum linked to this novel dichotomy.  The main objective of this work is to develop the theory of normal forms for a differentiable system that exhibits $\mu$-dichotomy; this will be done by establishing nonresonance condition based on the spectrum associated with this nonautonomous hyperbolicty.

\smallskip
In Section 2, we establish the fundamental components necessary for this study. We establish the notations and significant definitions for both the linear and nonlinear equations under investigation. In addition, we provide clear definition for the concept of equivalence and develop the premise that we will use to demonstrate this notion. 

Our main results are stated and proven in Section 3. Firstly, we provide the result of elimination nonresonant Taylor terms; this is done under the assumption that the linear component of the system exhibits a concept of uniformly bounded growth, while the nonlinearities are considered to be nonuniformly admissible (detailed definitions and examples may be found in Section 2). Secondly, we demonstrate the enhancement of this result when the nonlinearities are uniformly admissible, enabling us to derive the normal forms theorem. 

In Section 4, we provide a way to address this problem when the linear component exhibits only nonuniform bounded growth. We also discuss the challenges that arise in this scenario when attempting to produce a result of normal forms.

\section{Preliminaries and contextualization}

Let us proceed by establishing suitable notations for this work:
\begin{notation}
    {\rm 
    For a map $f$ defined on some region contained on $\mathbb{R}\times \mathbb{R}^d$ and taking values on $\mathbb{R}^m$ (some $m\in \mathbb{N}$), we write 
    \begin{itemize}
        \item $\Dom_f$ for its domain. If the function is evident, we just write $\Dom$.
        \end{itemize}

When they are well defined, we adopt the following notation for derivatives of $f$.
        \begin{itemize}
        \item $D_1f(t,x)$ is the derivative of $f$ respect to the {\it temporal} variable of its domain ({\it i.e.} $t\in \mathbb{R}$), evaluated on a point $(t,x)\in \Dom$.
        \item $D_2f(t,x)$ denotes the Jacobian of $f$, {\it i.e.} the differential respect to its  {\it spacial} variable (that is, $x\in \mathbb{R}^d$), evaluated on a point $(t,x)\in \Dom$.
        \item If $\mathscr{W}_1,\dots,\mathscr{W}_n$ are manifolds such that $\mathscr{W}_1\oplus \cdots\oplus \mathscr{W}_n=\mathbb{R}^d$, we denote the Jacobian of $f$ respect to the manifold $\mathscr{W}_i$ by $D_{2_{\mathscr{W}_i}}f(t,x)$. If the manifold decomposition is clear, we just write $D_{2_i}f(t,x)$.
        \item If the manifold $\mathscr{W}_i$ is decomposed on $d_i$ directions, denote the partial derivative of $f$ respect to the $j$-th component of the $i$-th manifold by $D_{2_{i,j}}f(t,x)$. 
    \end{itemize}
    We extend this notation for higher order derivatives. Finally, for functions $\psi:\mathbb{R}\to \mathbb{R}^m$ and $\varphi:\mathbb{R}^d\to\mathbb{R}^m$, we write
    \begin{itemize}
        \item $\left.D_t\left[\psi(t)\right]\right|_{t=\widetilde{t}}$ for the derivative of the function $\psi$ respect to $t$, evaluated on the point $\widetilde{t}$.
            \item $\left.D_x\left[\varphi(x)\right]\right|_{x=\widetilde{x}}$ for the differential of the function  $\varphi$ respect to $x$, evaluated on the $\widetilde{x}$.
    \end{itemize}
    
   We privilege these last notations when the functions are written as a composition or operations between other functions. In both cases, if we mean the {\it function} derivative and not a specific evaluation, we omit the indication of the evaluation in the sub index.
    }
\end{notation}

\subsection{The linear part}

We study a linear equation of the form
\begin{equation}\label{513}
        \dot{x}=A(t)x(t),
    \end{equation}
where $t\mapsto A(t)$ is locally integrable. Denote its evolution operator by $\Phi:\mathbb{R}\times \mathbb{R}\to \mathbb{R}^d$.

\begin{definition}%\label{361}
\cite[p. 621]{Silva}    We say a function $\mu:\mathbb{R}\to \mathbb{R}^+$ is a \textbf{growth rate} if it is strictly increasing, $\mu(0)=1$, $\lim_{t\to +\infty}\mu(t)=+\infty$ and $\lim_{t\to-\infty}\mu(t)=0$. If moreover  $\mu$ is differentiable, we say it is a \textbf{differentiable growth rate}.
\end{definition}

\begin{definition}\cite[p. 621]{Silva}  
    Denote the sign of $a\in \mathbb{R}$ by $\sgn(a)$. Let $\mu:\mathbb{R}\to \mathbb{R}$ be a growth rate. The system (\ref{513}) admits \textbf{nonuniform $\mu$-dichotomy} ($\NmuD$) if  there exist an invariant projector $t\mapsto \P(t)$ for (\ref{513}) and constants $K\geq 1$, $\alpha<0$, $\beta>0$ and $\theta,\nu\geq 0$ such that $\alpha+\theta<0$, $\beta-\nu>0$ and 
    \begin{align*}
            \norm{\Phi(t,s)\P(s)}&\leq K\left(\frac{\mu(t)}{\mu(s)}\right)^\alpha\mu(s)^{\sgn(s)\theta}\,\,\,\quad\text{ for }t\geq s,\nonumber\\
            \norm{\Phi(t,s)[\Id-\P(s)]}&\leq K\left(\frac{\mu(t)}{\mu(s)}\right)^\beta\mu(s)^{\sgn(s)\nu}\qquad\text{ for }t\leq s.
        \end{align*}
        If moreover $\theta=\nu=0$, then we say (\ref{513}) admits \textbf{uniform $\mu$-dichotomy ($\muD$)}.
\end{definition}

The following cases, among many others, are covered by this definition
\begin{itemize}
        \item [a)] Taking $\mu(t)=e^t$, $\beta=-\alpha$ and $\theta=\nu=0$, we recover the exponential dichotomy ($\ED$) defined by O. Perron \cite{Perron} and widely used on literature with spectral purposes \cite{Siegmund,Siegmund2,Siegmund3}.
        \item [b)] Taking just $\mu(t)=e^t$, we obtain the \textbf{nonuniform exponential dichotomy}, also widely studied \cite{Chu,Dragicevic2,Zhang}.
        \item [d)] For a strictly increasing surjective function $\nu:\mathbb{R}_0^+\to[1,+\infty)$ we can define a growth rate by
         \begin{equation*}
    \mu(t)= \left\{ \begin{array}{lcc}
             \nu(t) &  \text{ if } &   t\geq 0, \\
             \\ \frac{1}{\nu(|t|)} & \text{ if }& t\leq 0 .
             \end{array}
   \right.
\end{equation*}
In that case we say $\nu$ \textbf{induces} the growth rate $\mu$.
\item [e)] The map $p:\mathbb{R}^+_0\to [1,+\infty)$, $t\mapsto t+1$ induces a growth rate associated to the \textbf{nonuniform polynomial dichotomy} ($\NPD$). If moreover $\theta=\nu=0$, we obtain the \textbf{polynomial dichotomy} ($\PD$) \cite{Dragicevic3,Dragicevic5,Dragicevic6}.
\end{itemize}

\begin{definition}%\label{423}
\cite[p. 623]{Silva}  
    For a differentiable growth rate $\mu:\mathbb{R}\to \mathbb{R}$ we define the \textbf{nonuniform $\mu-$dichotomy spectrum} of (\ref{513}) by
    $$\Si_\NmuD(A):=\left\{\gamma\in\mathbb{R}: \dot{y}=\left[A(t)-\gamma\frac{\mu'(t)}{\mu(t)}\Id\right]y(t)\text { does not admit }\NmuD\right\}\,.$$

   Moreover, we call the complement of this set $\rho_\NmuD(A)=\mathbb{R}\setminus \Si_\NmuD(A)$ the \textbf{nonuniform $\mu$-resolvent set} of (\ref{513}).
\end{definition}

\begin{definition} \cite[p. 623]{Silva}  
    For a differentiable growth rate $\mu:\mathbb{R}\to \mathbb{R}$ we define the \textbf{uniform $\mu-$dichotomy spectrum} of (\ref{513}) by
    $$\Si_\muD(A):=\left\{\gamma\in\mathbb{R}: \dot{y}=\left[A(t)-\gamma\frac{\mu'(t)}{\mu(t)}\Id\right]y(t)\text { does not admit }\muD\right\}\,.$$

   Moreover, we call the complement of this set $\rho_\muD(A)=\mathbb{R}\setminus \Si_\muD(A)$ the \textbf{uniform $\mu$-resolvent set} of (\ref{513}).
\end{definition}

\begin{definition}%\label{424}
\cite[p. 630-631]{Silva}          We say the system (\ref{513}) has \textbf{nonuniform $\mu$-bounded growth rate with parameter $\epsilon>0$} or just \textbf{$\mu,\epsilon$-growth} if there are constants $K\geq 1$, $a\geq 0$ such that
    $$\norm{\Phi(t,s)}\leq K\left(\frac{\mu(t)}{\mu(s)}\right)^{\sgn(t-s)a}\mu(s)^{\sgn(s)\epsilon},\quad\forall\,t,s\in \mathbb{R}.$$

        Moreover, if we can choose $\epsilon=0$, we say the system has \textbf{uniform $\mu$-bounded growth} o just \textbf{$\mu$-growth}.
\end{definition}

A recent theorem by C. Silva states:

\begin{theorem}\cite[Theorem 8]{Silva}
    If (\ref{513}) has $\mu,\epsilon$-growth for some parameter $\epsilon>0$, then there exist some $n\in \{1,\dots,d\}$ such that its nonuniform $\mu$-dichotomy is nonempty, compact and has the form
        $$\Si_\NmuD(A)=\lambda_1\cup\cdots\cup \lambda_n,$$
        where each $\lambda_i=[a_i,b_i]$, with $a_i\leq b_i$ is an spectral interval.

\end{theorem}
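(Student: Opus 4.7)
\medskip

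\textbf{Proof plan.} The whole argument is the nonautonomous analog of the Sacker--Sell spectral theorem, adapted to the shifted exponent $\gamma\mu'(t)/\mu(t)$. The central observation is that the evolution operator of the shifted equation $\dot y=[A(t)-\gamma\mu'(t)/\mu(t)\Id]y$ is
\[
\Phi_\gamma(t,s)=\Bigl(\frac{\mu(t)}{\mu(s)}\Bigr)^{-\gamma}\Phi(t,s),
\]
obtained via the scaling $y(t)=\mu(t)^{-\gamma}x(t)$. Consequently the defining $\NmuD$ bounds for the shifted system are exactly the original ones with $\alpha,\beta$ replaced by $\alpha-\gamma,\beta-\gamma$ (and the same $\theta,\nu$). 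With this in hand, the strategy splits into three steps: openness of the resolvent, boundedness of the spectrum, and a rank argument capping the number of components by $d$.

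Step~1 (resolvent is open, so the spectrum is closed). Assume $\gamma_0\in\rho_\NmuD(A)$ with projector $\P$ and constants $K,\alpha,\beta,\theta,\nu$ satisfying $\alpha+\theta<0$, $\beta-\nu>0$. The shifted system at a nearby $\gamma$ has exponents $\alpha-(\gamma-\gamma_0)$ and $\beta-(\gamma-\gamma_0)$; if $|\gamma-\gamma_0|<\min\{-\alpha-\theta,\beta-\nu\}$ both inequalities persist and the same projector $\P$ witnesses $\NmuD$. Hence $\rho_\NmuD(A)$ is open.

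Step~2 (spectrum is bounded). The $\mu,\epsilon$-growth hypothesis gives
\[
\|\Phi_\gamma(t,s)\|\le K\Bigl(\tfrac{\mu(t)}{\mu(s)}\Bigr)^{\sgn(t-s)a-\gamma}\mu(s)^{\sgn(s)\epsilon}.
\]
For $\gamma>a+\epsilon$, the choice $\P\equiv\Id$ produces an $\NmuD$ with $\alpha=a-\gamma<-\epsilon=-\theta$ and a vacuous unstable bound; symmetrically, for $\gamma<-a-\epsilon$, $\P\equiv 0$ works. Thus $\Si_\NmuD(A)\subseteq[-a-\epsilon,a+\epsilon]$. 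Combined with Step~1, $\Si_\NmuD(A)$ is compact.

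Step~3 (at most $d$ spectral intervals). For each $\gamma\in\rho_\NmuD(A)$ pick the invariant projector $\P_\gamma$ and consider $r(\gamma):=\mathrm{rank}\,\P_\gamma(0)$. The key lemma to establish is: within one connected component of $\rho_\NmuD(A)$ the function $r$ is constant, while if $\gamma_1<\gamma_2$ lie in distinct components separated by a piece of $\Si_\NmuD(A)$, then $r(\gamma_1)<r(\gamma_2)$. The first fact follows from the continuity-of-projectors-under-perturbation argument of Step~1 (the same $\P$ persists in a neighborhood). For the second, one argues that $\mathrm{Im}\,\P_{\gamma_2}(s)\supseteq\mathrm{Im}\,\P_{\gamma_1}(s)$: a vector that decays at rate $\alpha-\gamma_1<0$ also decays at rate $\alpha-\gamma_2<\alpha-\gamma_1<0$, so it belongs to the stable manifold at $\gamma_2$ as well; if the two ranks were equal, the $\NmuD$ bounds would interpolate across $[\gamma_1,\gamma_2]$, contradicting the presence of spectrum there. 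Since $0\le r(\gamma)\le d$, the resolvent has at most $d+1$ unbounded/bounded components, hence the spectrum has at most $d$ connected pieces. Each piece is closed in the compact set $\Si_\NmuD(A)$ and connected in $\mathbb{R}$, thus a closed interval $[a_i,b_i]$. Nonemptiness follows from $r=0$ on the right-unbounded component and $r=d$ on the left-unbounded component, which forces at least one jump.

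The main obstacle is the rank-jump claim in Step~3; the openness and boundedness parts are essentially bookkeeping with the bound on $\|\Phi_\gamma\|$, but the monotonicity of stable ranks across spectral gaps requires carefully producing the interpolated projector on $[\gamma_1,\gamma_2]$ when the ranks coincide, which is where Silva's construction of the spectrum is genuinely used.
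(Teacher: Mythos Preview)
The paper does not supply its own proof of this statement: it is quoted verbatim as \cite[Theorem~8]{Silva} and used as a black box. The only proof-adjacent remark the authors make is that the general (possibly unbounded, possibly empty) interval structure of $\Si_\NmuD(A)$ ``follows easily from \cite[Lemma~7]{Silva}'', and later in the proof of Lemma~\ref{509} they invoke the same lemma to say that the rank of the dichotomy projector is strictly increasing across spectral gaps. So there is nothing in the present paper to compare your argument against; your sketch is a reconstruction of Silva's proof, not of anything the authors wrote.

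That said, your outline is the standard Sacker--Sell strategy and matches what the paper implicitly relies on (rank monotonicity of projectors is exactly the content they attribute to \cite[Lemma~7]{Silva}). One internal slip: in Step~2 you correctly take $\P\equiv\Id$ for $\gamma>a+\epsilon$ (rank $d$ on the right) and $\P\equiv 0$ for $\gamma<-a-\epsilon$ (rank $0$ on the left), and your monotonicity claim $r(\gamma_1)<r(\gamma_2)$ for $\gamma_1<\gamma_2$ is consistent with that; but the final sentence of Step~3 swaps the two, asserting $r=0$ on the right-unbounded component and $r=d$ on the left. Fix that orientation. Otherwise the plan is sound, with the genuine work sitting exactly where you flag it: showing that equal ranks on two resolvent components would let the dichotomy interpolate across the gap, contradicting the presence of spectrum.
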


Although the result is stated under the hypothesis of nonuniform $\mu$-bounded growth, if this condition is not obtained, the spectrum is still a finite (maybe empty) union of closed (maybe not compact) intervals, which follows easily from \cite[Lemma 7]{Silva}. We call each one of the open intervals that compose the resolvent set a {\it spectral gap}.

\smallskip
Moreover, although not stated in \cite{Silva}, these conclusions are also true for the uniform $\mu$-spectrum, {\it i.e.} $\Sigma_\muD(A)$ is also a finite (maybe empty) union of closed (maybe not compact) intervals, and the proof is the same as the author gives there. Furthermore, if the system admits uniform $\mu$-bounded growth, then $\Sigma_\muD(A)$ is nonempty and compact.

\smallskip

For the following, we consider a notion of kinematic similarity.

\begin{definition}%\label{364}
\cite[p. 636-637]{Silva}  Let $\epsilon\geq0$. We say (\ref{513}) a
\begin{equation}\label{514}
    \dot{z}=B(t)z,
\end{equation}
are \textbf{nonuniformly $(\mu,\epsilon)$-kinematically similar} if there is a differentiable matrix function $S:\mathbb{R}\to GL_d(\mathbb{R})$ and a constant $M_\epsilon>0$ such that
    \begin{equation}\label{515}
        \norm{S(t)}\leq M_\epsilon\mu(t)^{\sgn(t)\epsilon}\quad \text{ and }\quad\norm{S(t)^{-1}}\leq M_\epsilon\mu(t)^{\sgn(t)\epsilon},\qquad \forall\,t\in \mathbb{R}
    \end{equation}
    verifying that if $t\mapsto y(t)$ is solution to (\ref{513}), then $t\mapsto z(t)=S(t)^{-1}y(t)$ is solution of (\ref{514}) and analogously, if $t\mapsto z(t)$ is solution of (\ref{514}), then $t\mapsto y(t)=S(t)z(t)$ is solution of (\ref{513}). If $\epsilon=0$, we say the systems are \textbf{uniformly kinematically similar}.

    \smallskip
    Every $S$ satisfying (\ref{515}) for some $\epsilon\geq 0$ is called a \textbf{nonuniform Lyapunov matrix function with respect to $\mu$} and the change of variables $y(t)=S(t)x(t)$ is called a \textbf{nonuniform Lyapunov transformation with respect to $\mu$}.
\end{definition}

Following the steps of S. Siegmund \cite{Siegmund2}, the authors  J. Chu, F-F, Liao, Y. Xia, and W. Zhang \cite{Chu} gave a reducilibily and block diagonalization result through a kinematic similarity for nonuniform exponential dichotomies. Later, C. Silva \cite{Silva} extended these results for the nonuniform $\mu$-dichotomy. Silva's reducibility result \cite[Theorem 12]{Silva} for systems with $\mu$-dichotomies still holds for the uniform case, with the same demonstration as the author gives there. We state this result now, but slightly change the redaction in other to consider only the uniform case. 

\begin{theorem}\label{532}
    Assume system (\ref{513}) verifies uniform $\mu$-bounded growth. Then there exists some $n\in\{1,\dots,d\}$ such that
    $$\Sigma_\muD(A)=\lambda_1\cup\cdots\cup\lambda_n,$$
    where each $\lambda_i=[a_i,b_i]$ is a spectral interval. Moreover, there exists a (uniform) kinematic similarity between (\ref{513}) and a a block diagonal system 
    \begin{equation*}
         \dot{y}=B(t)y(t)=\begin{pmatrix}
B_1(t)&&\\
&\ddots&\\
&&B_{n}(t)
\end{pmatrix}y(t),
    \end{equation*}
    where $\Sigma_\muD(B_i)=\lambda_i$ for every $i=1,\dots,n$.
\end{theorem}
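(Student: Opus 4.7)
The decomposition $\Sigma_\muD(A)=\lambda_1\cup\cdots\cup\lambda_n$ comes for free from the comments preceding the statement: in general the uniform $\mu$-spectrum is a finite union of closed intervals, and under uniform $\mu$-bounded growth it is additionally nonempty and compact. So the real content is the construction of the kinematic similarity. The plan is to mimic \cite[Theorem 12]{Silva}, where the analogous statement is proved for the \emph{nonuniform} $\mu$-dichotomy spectrum, and to check that in the uniform setting every nonuniform factor $\mu(s)^{\sgn(s)\epsilon}$ degenerates to a constant.

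Order the spectral intervals so that $a_1\leq b_1<a_2\leq \cdots< a_n\leq b_n$, and pick a point $\gamma_i$ in each of the $n-1$ bounded spectral gaps $(b_i,a_{i+1})\subset\rho_\muD(A)$. By definition of $\Sigma_\muD$, the shifted system $\dot y=[A(t)-\gamma_i\mu'(t)/\mu(t)\Id]y$ has evolution $(\mu(t)/\mu(s))^{-\gamma_i}\Phi(t,s)$ and admits uniform $\mu$-dichotomy with invariant projector $\P_i(t)$. Because the shift multiplies $\Phi$ only by a scalar, $\P_i(t)$ is also an invariant projector for (\ref{513}); setting $t=s$ in the dichotomy inequality and using $\theta=\nu=0$ gives the uniform bound $\norm{\P_i(t)}\leq K$. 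I will then define $Q_1=\P_1$, $Q_i=\P_i-\P_{i-1}$ for $2\leq i\leq n-1$, and $Q_n=\Id-\P_{n-1}$. An ordering argument for stable/unstable subbundles, standard in the Sacker--Sell spectral theory and already executed in \cite{Silva} for the nonuniform case, shows that $(Q_i(t))_{i=1}^{n}$ is a resolution of the identity by uniformly bounded, pairwise commuting, invariant projectors. Let $V_i(t):=\im Q_i(t)$, so that $\mathbb{R}^d=V_1(t)\oplus\cdots\oplus V_n(t)$ is an invariant splitting.

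For each $i$, I would fix a basis of $V_i(0)$ and propagate it by $\Phi(\cdot,0)$ to obtain a basis of $V_i(t)$; assembling these into columns produces a differentiable matrix $S(t)\in GL_d(\mathbb{R})$. By construction, the change of variables $y=S(t)z$ transforms (\ref{513}) into a block diagonal system $\dot z=B(t)z$ with blocks $B_i$ of the expected sizes. The main technical step is to verify the uniform bounds $\norm{S(t)},\norm{S(t)^{-1}}\leq M_0$: writing an arbitrary vector as $v=\sum_i Q_i(t)v$, the uniform $\mu$-bounded growth of $\Phi$ combined with the uniform bounds on the $Q_i(t)$ controls both norms with no $\mu(t)^{\epsilon}$ factor appearing. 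This is precisely where Silva's nonuniform argument simplifies and where the assumption $\theta=\nu=0$ is essential; I expect this estimate to be the only subtle point of the whole argument.

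Finally, one has to check $\Sigma_\muD(B_i)=\lambda_i$. If $\gamma\notin\lambda_i$, then $\gamma$ lies in a gap of $\Sigma_\muD(A)$ whose corresponding invariant projector is a sum of the $Q_j$; this projector descends through the block structure and produces a uniform $\mu$-dichotomy for $B_i$, so $\gamma\notin\Sigma_\muD(B_i)$. Conversely, failure of the uniform $\mu$-dichotomy at a point $\gamma\in\lambda_i$ transfers through the (already established) kinematic similarity to the block $B_i$, which by the interval arrangement is the only block that can be responsible. Everything beyond the boundedness of $S$ and $S^{-1}$ is either a direct rewriting of Silva's reducibility argument in the uniform regime or an elementary combinatorial fact about spectral intervals.
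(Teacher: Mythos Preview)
Your overall plan---follow Silva's reducibility argument and observe that every nonuniform correction $\mu(s)^{\sgn(s)\epsilon}$ collapses to $1$---is exactly what the paper does (indeed, the paper gives no proof and simply asserts that Silva's demonstration carries over verbatim). The decomposition of the spectrum, the choice of gap points $\gamma_i$, the hierarchy of projectors $\P_i$ and the resolution $Q_i$, and the final identification $\Sigma_\muD(B_i)=\lambda_i$ are all correctly outlined.

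There is, however, a genuine gap in your construction of $S(t)$. If you literally fix a basis of each $V_i(0)$ and propagate it by $\Phi(\cdot,0)$, the resulting matrix is $S(t)=\Phi(t,0)C$ for some constant $C\in GL_d(\mathbb{R})$; this transforms (\ref{513}) into $\dot z=0$ (certainly block diagonal, but with the wrong spectrum), and more importantly $\norm{S(t)}$ is \emph{not} bounded: uniform $\mu$-bounded growth only gives $\norm{\Phi(t,0)}\leq K\,\mu(t)^{a}$, which blows up. The sentence ``the uniform $\mu$-bounded growth of $\Phi$ combined with the uniform bounds on the $Q_i(t)$ controls both norms'' does not rescue this, because the growth bound is not uniform in $t$. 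In Silva's argument (as in Siegmund's and Chu \textit{et al.}'s before it), the Lyapunov matrix $S(t)$ is built from the \emph{projectors} $Q_i(t)$ themselves---for instance via an orthogonalisation at each fixed $t$---so that the uniform bound $\norm{Q_i(t)}\leq K$ translates directly into $\norm{S(t)},\norm{S(t)^{-1}}\leq M_0$; the flow is used only afterwards to check differentiability of $S$ and to verify that the transformed system is block diagonal. You should replace the ``propagate a basis'' step by that projector-based construction.
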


\subsection{The perturbation} Now we consider a {\it perturbation} of system (\ref{513}), i.e. a system of the form
\begin{equation}\label{516}
    \dot{x}=A(t)x+F(t,x),
\end{equation}
where $F$ is a $C^\ell$-Carath\'eodory class function verifying tailored conditions which we will express later.

\begin{definition}
    We say that a map $F:\Dom_F\subset \mathbb{R}\times \mathbb{R}^d\to \mathbb{R}^d$ is a \textbf{Carath\'eodory class} function if  for every interval  $I\subset \mathbb{R}$ and open set $U\subset \mathbb{R}^d$ such that $I\times U\subset \Dom_F$ it verifies:
    \begin{itemize}
        \item [i)] $F(t,\cdot):U\to \mathbb{R}^d$ is continuous for almost all fixed $t\in I$ (i.e. outside a set of zero Lebesgue measure),
        \item [ii)] $F(\cdot,x):I\to \mathbb{R}^d$ is measurable for all $x\in U$.
        \end{itemize}
Moreover, for $\ell\in \mathbb{N}$, we say $F$ is \textbf{$C^\ell$-Carath\'eodory class} if:
        \begin{itemize}
        \item [iii)] for almost $t\in I\subset \mathbb{R}$ and all $x\in U\subset \mathbb{R}^d$, the $\ell$-th partial derivative $D_2^\ell F(t,x)$ exists,
        \item [iv)] for each $j\in\{1,\dots,\ell\}$, the function $D_2^jF$ is Carath\'eodory class. 
    \end{itemize}
\end{definition}

It is well known that under these conditions and if $F(t,0)=0$ for all $t\in \mathbb{R}$, (\ref{516}) has uniquely defined solutions on some neighborhood of the origin, which is moreover a fixed point for this system.

\begin{definition}
Consider a growth rate $\mu$.    We say that a measurable function $\psi:\mathbb{R}\to \mathbb{R}^+_0$ is \textbf{$\mu$-admissible} if for every $\delta>0$ and every $t\in \mathbb{R}$ we have
    $$\int_{-\infty}^t\psi(s)\mu(s)^\delta ds+\int_{t}^\infty \psi(s)\mu(s)^{-\delta}ds<+\infty.$$

    If, furthermore
    $$\sup_{t\in \mathbb{R}}\left\{\int_{-\infty}^t\psi(s)\left(\frac{\mu(t)}{\mu(s)}\right)^{-\delta} ds+\int_{t}^\infty \psi(s)\left(\frac{\mu(t)}{\mu(s)}\right)^{\delta}ds\right\}<+\infty,$$
    we say that $\psi$ is \textbf{uniformly $\mu$-admissible}.
\end{definition}

For $\delta>0$, let us denote
$\zeta_{\psi,\mu,\delta}^+,\,\zeta_{\psi,\mu,\delta}^-:\mathbb{R}\to \mathbb{R}$ by
$$\zeta_{\psi,\mu,\delta}^+(t)=\int_t^\infty \psi(s)\mu(s)^{-\delta}ds\,\quad\text{
 and }\quad \zeta_{\psi,\mu,\delta}^-(t)=\int_{-\infty}^t \psi(s)\mu(s)^{\delta}ds.$$

Note that $\psi$ is admissible if and only if both $\zeta_\delta^+$ and $\zeta_\delta^-$ are continuous well defined functions for every $\delta>0$. Moreover, $\psi$ is uniformly admissible if and only if both the maps
$$t\mapsto \mu(t)^\delta\zeta_{\psi,\mu,\delta}^+(t)\, \quad\text{and}\quad t\mapsto\mu(t)^{-\delta}\zeta_{\psi,\mu,\delta}^{-}(t),$$
are bounded.

\begin{remark}
    {\rm A simple but useful observation is that both $\mu$-admissible and uniformly $\mu$-admissible functions define sets closed under addition and ponderation by positive constants.}
\end{remark}

\begin{example}
    {\rm
    For the exponential growth rate, {\it i.e.} $\mu(t)=e^{t}$, it is obvious that every bounded function is admissible (we may say, exponentially admissible), and actually uniformly exponentially admissible. Nevertheless, evidently this is not exhaustive, since, at least, every polynomial is exponentially admissible as well.
    }
\end{example}

\begin{example}\label{531}
{\rm    H\"ormander class functions are very important in the study of pseudo-differential operators, since they represent suitable symbols for them (\cite{Grushin,Kato}, among many others). The general definition of the H\"ormander spaces, for $m\in \mathbb{R}$ and $\rho,\delta\in [0,1]$, is the following:
$$S^m_{\rho,\delta}=\left\{f\in C^\infty(\mathbb{R}^{2d}): \forall\,\alpha,\beta\in \mathbb{N}^d\,, \exists\, c_{\alpha,\beta}\in \mathbb{R}\text{ s.t. }\norm{\frac{\partial^{|\alpha|+|\beta|}}{\partial x^\beta\partial \xi^\alpha}f(\xi,x)}\leq c_{\alpha,\beta}(1+|\xi|)^{m-\rho|\alpha|-\delta|\beta|}\right\}.$$

These spaces are known to be rather large classes of functions. In particular, each one is an infinite dimensional vector space.
Consider the scalar case {\it i.e.} $d=1$. Replace the variable $\xi=t$ and choose any derivative of any function in any of the spaces $S^{-1}_{\rho,\delta}$ of H\"ormander class functions. Call that function $\widetilde{\psi}$ and define $\psi:\mathbb{R}\to \mathbb{R}^+$ by $\psi(t)= \norm{\widetilde{\psi}(t,0)}$. The conclusion is that $\psi$ is polynomially admissible.
}
\end{example}

\subsection{Notions of equivalence}

\begin{definition}
    For a function $\xi:\mathbb{R}\to \mathbb{R}^+$ we define the \textbf{$\xi$-trumpet neighborhood} of the trivial solution by
    $$T_\xi=\left\{(t,x)\in \mathbb{R}\times \mathbb{R}^d: |x|\leq\xi (t)\right\}.$$

    When $\xi$ only takes values in some interval $[\varepsilon,\varepsilon']$ with $\varepsilon>0$, we say $T_\xi$ is a \textbf{tubular neighborhood} of the trivial solution.
\end{definition}

Note that if $d=2$ and if $\xi$ is a growth rate, then the set $T_\xi$ looks like an infinite solid trumpet on $\mathbb{R}^3$, which motivates this definition.  The term {tubular neighborhood} has been used several times in literature \cite{Cuong,Siegmund3}, among others.

\begin{definition}
Consider a second nonlinear system 
\begin{equation}\label{517}
    \dot{y}=\widetilde{F}(t,y),
\end{equation}
where $\widetilde{F}$ is a $C^\ell$-Carath\'eodory function with $\widetilde{F}(t,0)\equiv 0$ satisfying standard conditions of existence and uniqueness of solutions. We say the systems (\ref{517}) and (\ref{516}) are \textbf{nonuniformly locally $C^\ell$-equivalent} around the zero solution if there are two functions $\xi,\widetilde{\xi}:\mathbb{R}\to \mathbb{R}^+$ and maps
    $$H:T_\xi \to \mathbb{R}^d\quad\text{and}\quad H^{-1}:T_{\wxi}\to \mathbb{R}^d,$$
    such that:
    \begin{itemize}
        \item [i)] For every fixed $t\in \mathbb{R}$, $H(t,\cdot)$ and $H^{-1}(t,\cdot)$ are $C^\ell$-diffeomorphisms (or homeomorphisms if $\ell=0$) between $B_{\xi(t)}(0)$ (resp. $B_{\wxi(t)}(0)$) and its image, and inverses one of the other (when the definition domain allows it).
    \item [ii)] If $t\mapsto y(t)$ is a solution of (\ref{517}) whose graph is contained on $T_\wxi$, then $t\mapsto H^{-1}(t,y(t))$ is a solution of (\ref{516}). If $t\mapsto x(t)$ is solution of (\ref{516}) whose graph is contained on $T_\xi$, then $t\mapsto H(t,x(t))$ is solution of (\ref{517}).
    \item [iii)] Zero solutions are continuously mapped onto each other:
    $$\lim_{x\to 0}H(t,x)=\lim_{x\to 0}H^{-1}(t,x)=0,$$
    not necessarily uniformly for $t\in \mathbb{R}$.
    \end{itemize}

    If moreover both $\xi$ and $\wxi$ only take values on some interval $[\varepsilon,\varepsilon']$, with $\varepsilon>0$ and in {\rm iii)} the convergence is uniform, we say the the systems (\ref{517}) are (\ref{516}) are \textbf{uniformly locally $C^\ell$-equivalent} around the zero solution.
\end{definition}

\subsection{Our hypothesis}

\begin{itemize}
    \item [(H1)] The map $t\mapsto A(t)$ is locally integrable and has $\mu$-bounded growth. Moreover, there exist $n\in \mathbb{N}$, with $1\leq n \leq d$, $d_i\in \mathbb{N}$ such that $d_1+\cdots+d_n=d$, and $A_i:\mathbb{R}^d\to \mathcal{M}_{d_i}(\mathbb{R})$ maps such that 
    \begin{equation*}
    A(t)=\begin{pmatrix}
A_1(t)&&\\
&\ddots&\\
&&A_{n}(t)
\end{pmatrix}.
\end{equation*}

Furthermore, if $\Sigma_\muD(A)=\lambda_1\cup\cdots\cup\lambda_n$, where each $\lambda_i=[a_i,b_i]$ is a spectral interval, then $\Sigma_\muD(A_i)=\lambda_i$. We denote the evolution operator correspondent to the block $\dot{x}_i=A_i(t)x_i$ by $\Phi_i$.

\item [(H2)]  $F:\mathbb{R}\times \mathbb{R}^d\to \mathbb{R}^d$ is a $C^\ell$-Carath\'eodory map with $F=(F_1,\dots,F_n)$, where $F_i:\mathbb{R}\times \mathbb{R}^d\to \mathbb{R}^{d_i}$, verifying $F(t,0)=D_2F(t,0)=0$. Moreover, there is a $\mu$-admissible function $\psi$ such that 
$$\norm{D_2^mF(t,0)}\leq \psi(t),\quad\,\forall\,m=2,\dots,\ell\text{ and almost all }t\in \mathbb{R}.$$
\end{itemize}

\begin{example}
    {\rm
   Consider the polynomial growth rate, $d=1$ and a map $\varphi\in S^{-1}_{\rho,\delta}$. From Example \ref{531}, it is easily deduced that $F(t,x)=D_t\varphi(t,x)$ verifies (H2).
    }
\end{example}

We also consider later a uniform version of this hypothesis, {\it i.e.}
\begin{itemize}
\item [(H2')]  $F:\mathbb{R}\times \mathbb{R}^d\to \mathbb{R}^d$ is a $C^\ell$-Carath\'eodory map with $F=(F_1,\dots,F_n)$, where $F_i:\mathbb{R}\times \mathbb{R}^d\to \mathbb{R}^{d_i}$, verifying $F(t,0)=D_2F(t,0)=0$. Moreover, there is a \textbf{uniformly} $\mu$-admissible function $\psi$ such that 
$$\norm{D_2^mF(t,0)}\leq \psi(t),\quad\,\forall\,m=2,\dots,\ell\text{ and almost all }t\in \mathbb{R}.$$
\end{itemize}

\begin{remark}\label{533}
{\rm
Consider a system
$$\dot{x}=\mathfrak{A}(t)x+\mathfrak{F}(t,x),$$
where $\mathfrak{F}$ satisfies (H2) and $\mathfrak{A}$ verifies $\mu$-bounded growth but not (H1). Then, by Theorem \ref{532}, there is a uniform Lyapunov function $S:\mathbb{R}\to GL_d(\mathbb{R})$ which block diagonalizes $\mathfrak{A}$ while keeping its spectrum fixed. Moreover, defining $F(t,x)=S(t)^{-1}\mathfrak{F}(t,S(t)x)$, it is clear that $F$ still verifies (H2).
}
\end{remark}

Now we consider an algebraic structure for compact intervals. For $[a,b]$, $[c,d]$ and $\gamma\in \mathbb{R}$ we write
$$[a,b]+[c,d]:=[a+c,b+d]\qquad\text{y}\qquad\gamma\cdot [a,b]:=[\gamma a,\gamma b].$$

Consider now an index $j\in\{1,\dots,n\}$ and a multi index $\k=(k_1,\dots,k_n)\in \mathbb{N}_0^n$. If we write
\begin{equation}\label{518}
    \lambda_j\cap \sum_{i=1}^nk_i\lambda_i=\emptyset,
\end{equation}
we mean the interval $\lambda_j$ is disjoint to the compact interval obtained by the weighted by $\k$ sum of all spectral intervals. Equivalently, $\lambda_j$ is either at the left or the right to that sum, which means one of the following conditions verifies
\begin{equation}\label{519}
    a_j>k_1b_1+\cdots+k_nb_n,
\end{equation}
or
\begin{equation}\label{520}
  b_j<k_1a_1+\cdots+k_na_n ,
\end{equation}
in this case we can define
$$\dist\left(\lambda_j,\sum_{i=1}^nk_i\lambda_i\right)=\min\left\{a_j-k_1b_1+\cdots+k_nb_n,k_1a_1+\cdots+k_na_n-b_j\right\},$$
and it is clear that it is a strictly positive number. We call (\ref{518}) the \textbf{$\k$-th condition of nonresonance on position $j$}. With this notion we can define the last important hypothesis of this work:

\begin{itemize}
    \item [(H3)] \textit{(Spectral nonresonance)} The $\mu$-dichotomy spectrum $\Sigma_\NmuD(A)=\lambda_1\cup \cdots\cup\lambda_n$ of the linear system (\ref{513}) does not present resonances up until the degree $\ell$. That is, for every multi index $\k=(k_1,\dots,k_n)\in \mathbb{N}$ with $2\leq |\k|=k_1+\cdots+k_n\leq \ell$ and every position $j\in \{1,\dots,n\}$, the $\k$-th condition of nonresonance on position $j$ holds. In other words
    $$ \lambda_j\cap \sum_{i=1}^nk_i\lambda_i=\emptyset, \qquad\forall \,j\in\{1,\dots,n\}\text{ and all }2\leq \sum_{i=1}^nk_i\leq \ell.$$
\end{itemize}

\section{Elimination of nonresonant terms and normal forms}

This section is dedicated to presenting and demonstrating our main results. Although they have many parallels with S. Siegmund's Normal Forms Theorem \cite{Siegmund3}, these results are generalized for systems with $\mu$-dichotomies. For a multi index $\k=(k_1,\dots,k_n)$, we denote by $[x]^\k$ the tensor element in which the $\k$-th derivative ({\it i.e.} $D_2^\k=D_{2_1}^{k_1}\cdots D_{2_n}^{k_n}$) is applied as a linear transformation.

\begin{lemma}\label{509}

Suppose conditions (H1) and (H2) hold.  Suppose as well that the $\k$-th condition of nonresonance on position $j$ holds. Then, there is a nonuniform $C^\ell$-local equivalence between (\ref{516}) and
    \begin{equation*}%\label{388}
        \dot{x}=A(t)x+G(t,x),
    \end{equation*}
    where $G=(G_1,\dots,G_n)$ is a $C^\ell$-Carath\'eodory function that eliminates the $\k$-th Taylor term on position $j$ of $F$ and keeps all other Taylor terms fixed up to order $|\k|$, i.e., for any other multi index $\m\in \mathbb{N}_0^n$ with $1\leq |\m|\leq |\k|$:
    \begin{equation}\label{508}
        D^\m_2G_i(t,0)=\left\{ \begin{array}{lcc}
             D^\m_2F_i(t,0)&  \text{ for } &   \m\neq \k\,\, \quad\text{or}\quad i\neq j, \\
             \\ 0 & \text{ for } & \m=\k\quad\text{ and }\quad i= j.
             \end{array}
             \right.
    \end{equation}

\end{lemma}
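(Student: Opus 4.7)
The plan is to construct the equivalence via a polynomial change of coordinates of the form $x = y + h(t,y)$, where $h = (0,\dots,0, H(t)\,y^{\k},0,\dots,0)^T$ with the single nonzero entry in position $j$, and $H(t)$ a time-dependent multilinear map to be determined. Differentiating $x = y + h(t,y)$ along solutions and substituting $\dot{x}=A(t)x+F(t,x)$ and $\dot{y}=A(t)y+G(t,y)$, one obtains the functional identity
\begin{equation*}
A(t)h(t,y)+F(t,y+h(t,y))=G(t,y)+D_1h(t,y)+D_2h(t,y)\bigl(A(t)y+G(t,y)\bigr).
\end{equation*}
Matching the $\k$-th Taylor coefficient in position $j$, and using that $h$ is homogeneous of degree $|\k|\ge 2$ while $F$ vanishes to second order, the only nontrivial contributions at that order come from $A_j(t)H(t)y^{\k}$, the derivative term $D_2h\cdot A(t)y$ (which equals $H(t)\,\widetilde{A}_{\k}(t)\,y^{\k}$, where $\widetilde A_{\k}(t)$ denotes the derivation induced by the blocks $A_i$ on degree-$\k$ tensor polynomials), and the coefficient $\tfrac{1}{\k!}D_2^{\k}F_j(t,0)=:P(t)$. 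This yields the homological equation
\begin{equation*}
\dot{H}(t) = A_j(t)H(t) - H(t)\widetilde{A}_{\k}(t) + P(t).
\end{equation*}

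Next I would solve this equation using hypothesis (H3). By (H1) and Theorem \ref{532}, the block $A_j$ has uniform $\mu$-spectrum $\lambda_j$; standard tensor computations show that $\widetilde{A}_{\k}$ has uniform $\mu$-spectrum contained in $\sum_i k_i\lambda_i$. The nonresonance condition (\ref{518}) therefore yields a genuine spectral gap. Depending on whether (\ref{519}) or (\ref{520}) holds, I would set
\begin{equation*}
H(t)=-\int_{t}^{\infty}\Phi_j(t,s)\,P(s)\,\widetilde{\Phi}_{\k}(s,t)\,ds
\qquad\text{or}\qquad
H(t)=\int_{-\infty}^{t}\Phi_j(t,s)\,P(s)\,\widetilde{\Phi}_{\k}(s,t)\,ds,
\end{equation*}
where $\widetilde{\Phi}_{\k}$ is the evolution operator generated by $\widetilde{A}_{\k}$. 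The main analytic work consists in showing absolute convergence of these integrals and obtaining a nonuniform bound $\|H(t)\|\le M\,\mu(t)^{\sgn(t)\theta}$ for some $\theta\ge 0$. This is achieved by splitting the product of evolution operators into a factor controlled by $\mu$-bounded growth (H1) and a factor controlled by the spectral gap $\dist(\lambda_j,\sum k_i\lambda_i)>0$, and then absorbing the remaining integrand into the $\mu$-admissibility hypothesis on $\|P(s)\|\le\psi(s)/\k!$ coming from (H2). This is the technical heart of the argument.

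Once $H$ is in hand, I would define the trumpet radius $\wxi(t)$ so small that $\|D_2 h(t,y)\|\le \tfrac{1}{2}$ for all $(t,y)\in T_{\wxi}$; this is possible because $D_2 h$ is polynomial of degree $|\k|-1$ in $y$ with time-dependent coefficient controlled by $\|H(t)\|$, so choosing $\wxi(t)$ on the order of $\mu(t)^{-\sgn(t)\theta/(|\k|-1)}$ suffices. The inverse function theorem then produces, for each $t$, a $C^{\ell}$-diffeomorphism $H(t,\cdot):=y\mapsto y+h(t,y)$ together with its inverse $H^{-1}(t,\cdot)$ defined on a possibly smaller trumpet $T_{\xi}$, satisfying (i)--(iii) of the local equivalence definition; continuity of $H$ at the origin is immediate from $h(t,y)=O(|y|^{|\k|})$. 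The Carath\'eodory regularity of $H$ and $H^{-1}$ is transferred from the measurability of $t\mapsto H(t)$ (itself immediate from its integral representation and Fubini).

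Finally, the Taylor identity (\ref{508}) is verified directly. The map $G$ defined by
\begin{equation*}
G(t,y) = \bigl(\Id+D_2h(t,y)\bigr)^{-1}\!\Bigl[A(t)h(t,y)+F(t,y+h(t,y))-A(t)y-D_1h(t,y)-D_2h(t,y)A(t)y\Bigr]
\end{equation*}
has the same Taylor coefficients as $F$ at order strictly less than $|\k|$ (since $h$ starts at order $|\k|$), agrees with $F$ at order $|\k|$ in all positions $i\ne j$ and in position $j$ for multi-indices $\m\ne\k$ (by construction, $h$ carries only the pure $\k$-monomial), and has its $(j,\k)$-coefficient canceled by the choice of $H(t)$ satisfying the homological equation. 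This gives (\ref{508}) and completes the proof.
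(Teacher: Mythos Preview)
Your argument is essentially the paper's: once you apply your tensor-valued $H(t)$ to $y^{\k}$, you recover exactly the paper's $h_j(t,x)=\int_t^\infty\Phi_j(t,s)\tfrac{1}{\k!}D_2^{\k}F_j(s,0)[\Phi(s,t)x]^{\k}\,ds$ (up to a sign, since your conjugacy runs in the opposite direction), and convergence plus the size estimate come from the same mechanism---the $\mu$-dichotomy bounds on each block $\Phi_i$ together with the $\mu$-admissibility of $\psi$. Two small corrections: hypothesis (H2) only yields $\|H(t)\|\le \zeta(t)$ for a continuous positive function $\zeta$ (the paper's (\ref{510})), not the specific shape $M\mu(t)^{\sgn(t)\theta}$, so your trumpet radius should be phrased in terms of $\zeta$; and your displayed formula for $G$ carries a stray $-A(t)y$ that should be dropped.
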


\begin{proof}
We develop the proof in several steps. The main goal is to find a nonuniform $C^\ell$-local equivalence of the form $H(t,x)=x+h(t,x)$, where $h$ verifies some notion of smallness (which we express later).

\smallskip

\begin{itemize}
    \item \textit{Step 1: Definition and estimation of $h$.}

Let $\epsilon>0$. For each spectral interval $\lambda_i=[a_i,b_i]$ we choose two numbers $\a_i=\a_i(\epsilon)$ and $\b_i=\b_i(\epsilon)$  with
$$a_i-\epsilon\leq \a_i<a_i\qquad \text{and}\qquad b_i<\b_i\leq b_i+\epsilon.$$

\smallskip

An immediate consequence of \cite[Lemma 7]{Silva} is that for any system $\dot{x}=B(t)x$ and $\gamma\in \mathbb{R}$ such that $\gamma>\wgamma$ for every $\wgamma\in \Sigma_\muD(B)$, the system
$$\dot{x}=\left[B(t)-\gamma\frac{\mu'(t)}{\mu(t)}\Id\right]x(t),$$
admits $\muD$ with projector identity. This follows from the fact that the rank of the projector associated to the dichotomy is strictly increasing between different spectral gaps. Analogously, if $\gamma<\wgamma$ for every $\wgamma\in \Sigma_\muD(B)$, then the $\gamma$-shifted system
admits $\muD$ with zero projector. 

\smallskip

Then, as $\hat{a}_i<\Sigma_\muD(A_i)<\hat{b}_i$, there are constants $\alpha_i<0$, $\beta_i>0$ and $K\geq 1$ such that
\begin{align*}
    \norm{\Phi_i(t,s)\left(\frac{\mu(t)}{\mu(s)}\right)^{-\b_i}}\leq K\left(\frac{\mu(t)}{\mu(s)}\right)^{\alpha_i}\leq K&\,,\quad\forall\, t\geq s,\\
    \norm{\Phi_i(t,s)\left(\frac{\mu(t)}{\mu(s)}\right)^{-\a_i}}\leq K\left(\frac{\mu(t)}{\mu(s)}\right)^{\beta_i}\leq K&\,\quad \forall\,t\leq s,
\end{align*}
thus
\begin{align*}
    \norm{\Phi_i(t,s)}\leq K\left(\frac{\mu(t)}{\mu(s)}\right)^{\b_i}\leq K\left(\frac{\mu(t)}{\mu(s)}\right)^{b_i+\epsilon}&\,,\quad \forall\,t\geq s,\\
    \norm{\Phi_i(t,s)}\leq K\left(\frac{\mu(t)}{\mu(s)}\right)^{\a_i}\leq K\left(\frac{\mu(t)}{\mu(s)}\right)^{a_i-\epsilon}&\,,\quad \forall\,t\leq s.
\end{align*}

Now, for all $t\in \mathbb{R}$, $x\in \mathbb{R}^d$ and almost all $s\in \mathbb{R}$ the following estimation follows:

\begin{align*}
    \norm{\Phi_j(t,s)\frac{1}{\k!}D_x^{\k}F_j(s,0)\cdot[\Phi(s,t)x]^{\k}}=\norm{\Phi_j(t,s)\frac{1}{\k!}D_x^{\k}F_j(s,0)\cdot[\Phi_1(s,t)x_1]^{k_1}\cdots [\Phi_n(s,t)x_n]^{k_n}}\\
   \leq \norm{\Phi_j(t,s)}\cdot\frac{1}{\k!}\cdot\norm{D_x^{\k}F_j(s,0)}\cdot\norm{\Phi_1(s,t)}^{k_1}\cdot\norm{x_1}^{k_1}\cdots \norm{\Phi_n(s,t)}^{k_n}\cdot\norm{x_n}^{k_n}\\
   \leq \frac{\psi(s)}{\k!}\cdot\norm{x_1}^{k_1}\cdots \norm{x_n}^{k_n}\cdot\norm{\Phi_j(t,s)}\cdot\norm{\Phi_1(s,t)}^{k_1}\cdots\norm{\Phi_n(s,t)}^{k_n}.
\end{align*}

Thus, for $t\leq s$ we have
\begin{align*}
\norm{\Phi_j(t,s)\frac{1}{\k!}D_x^{\k}F_j(s,0)\cdot[\Phi(s,t)x]^{\k}}\leq&\frac{\psi(s)  K^{|\k|+1}}{\k!}\norm{x_1}^{k_1}\cdots \norm{x_n}^{k_n}\\
&\cdot\left(\frac{\mu(t)}{\mu(s)}\right)^{a_j-\left[k_1b_1+\cdots+ k_nb_n\right]-(|\k|+1)\epsilon},
\end{align*}
hence, if (\ref{519}) holds and we choose $\epsilon$ to be small enough, we obtain for $t\leq s$
$$\norm{\Phi_j(t,s)\frac{1}{\k!}D_x^{\k}F_j(s,0)\cdot[\Phi(s,t)x]^{\k}}\leq \frac{\psi(s)  K^{|\k|+1}}{\k!}\norm{x_1}^{k_1}\cdots \norm{x_n}^{k_n} \left(\frac{\mu(t)}{\mu(s)}\right)^{\frac{1}{2}\dist\left(\lambda_j,\sum_{i=1}^nk_i\lambda_i\right)}.$$

On the other hand, for $t\geq s$ we have
\begin{align*}
    \norm{\Phi_j(t,s)\frac{1}{\k!}D_x^{\k}F_j(s,0)\cdot[\Phi(s,t)x]^{\k}}\leq&\frac{\psi(s)  K^{|\k|+1}}{\k!}\norm{x_1}^{k_1}\cdots \norm{x_n}^{k_n}\\
&\cdot\left(\frac{\mu(t)}{\mu(s)}\right)^{b_j-\left[k_1a_1+\cdots+ k_na_n\right]+(|\k|+1)\epsilon},
\end{align*}
hence, if (\ref{520}) holds and we choose $\epsilon$ to be small enough, we obtain for $t\geq s$
$$\norm{\Phi_j(t,s)\frac{1}{\k!}D_x^{\k}F_j(s,0)\cdot[\Phi(s,t)x]^{\k}}\leq \frac{\psi(s) K^{|\k|+1}}{\k!}\norm{x_1}^{k_1}\cdots \norm{x_n}^{k_n}  \left(\frac{\mu(t)}{\mu(s)}\right)^{-\frac{1}{2}\dist\left(\lambda_j,\sum_{i=1}^nk_i\lambda_i\right)}.$$

\smallskip

Now, define the map $h=(h_1,\dots,h_n):\mathbb{R}\times\mathbb{R}^d\to \mathbb{R}^d$ given by
    \begin{equation*}
 h_i(t,x)=\left\{ \begin{array}{lcc}
             0&  \text{if} &  i\neq j,\\
             \\
               \mathlarger{\int}_t^{\infty}\Phi_j(t,s)\frac{1}{\k!}D_x^{\k}F_j(s,0)\cdot[\Phi(s,t)x]^{\k}ds&  \text{ if } &  i= j \text{  and (\ref{519}) holds},\\
             \\ -\mathlarger{\int}_{-\infty}^{t}\Phi_j(t,s)\frac{1}{\k!}D_x^{\k}F_j(s,0)\cdot[\Phi(s,t)x]^{\k}ds&  \text{ if } &  i= j \text{ and (\ref{520}) holds}.
             \end{array}
             \right.
\end{equation*}

For simplicity in notation, let us write $\dist(j,\k)=\dist\left(\lambda_j,\sum_{i=1}^nk_i\lambda_i\right)$. By the previous estimations we obtain that $h$ is well defined and
    \begin{equation*}
 \norm{h(t,x)}\leq\left\{ \begin{array}{lcc}
            
             \dfrac{K^{|\k|+1}}{\k!}  \mu(t)^{\frac{1}{2}\dist(j,\k)}\zeta_{\psi,\mu,\frac{1}{2}\dist(j,\k)}^+(t)\norm{x_1}^{k_1}\cdots\norm{x_n}^{k_n}&  \text{ if  (\ref{519}) holds},\\
             \\\dfrac{K^{|\k|+1}}{\k!}\mu(t)^{-\frac{1}{2}\dist(j,\k)}\zeta_{\psi,\mu,\frac{1}{2}\dist(j,\k)}^-(t)\norm{x_1}^{k_1}\cdots\norm{x_n}^{k_n}&  \text{ if  (\ref{520}) holds}.
             \end{array}
             \right.
\end{equation*}

We will continue the proof assuming (\ref{519}), but the other case follows similarly. In order to simplify notations, let us write
\begin{equation}\label{510}
    \zeta(t)=  \mu(t)^{\frac{1}{2}\dist(j,\k)}\zeta_{\psi,\mu,\frac{1}{2}\dist(j,\k)}^+(t),
\end{equation}
and note that it is a continuous strictly positive function. The conclusion is
\begin{equation}\label{501}
    \norm{h(t,x)}\leq \dfrac{K^{|\k|+1}}{\k!}\zeta(t)\norm{x_1}^{k_1}\cdots\norm{x_n}^{k_n}\leq\dfrac{K^{|\k|+1}}{\k!} \zeta(t)\norm{x}^{|\k|}\,.
\end{equation}

\item \textit{Step 2: The map $H:\mathbb{R}\times \mathbb{R}^d\to \mathbb{R}^d$, given by $(t,x)\mapsto x+h(t,x)$, is continuous (in both variables simultaneously) and infinitely continuously differentiable respect to $x$.}

\smallskip

It is trivial that if $h$ verifies the statement, then $H$ does too. For the continuity of $(t,x)\mapsto h(t,x)$, note first that each $t\mapsto \Phi_i(t,s)$ and $t\mapsto \Phi_i(s,t)$ are continuous. Then, the map
\begin{equation}\label{521}
    (t,x)=(t,x_1,\dots,x_n)\mapsto [\Phi_1(s,t)x_1]^{k_1}\cdots [\Phi_n(s,t)x_n]^{k_n},
\end{equation}
is continuous as well. Thus, as $h$ verifies the estimation (\ref{501}), and $\zeta$ is continuous, we obtain that $h$ is locally (in $t$) bounded. Now, Lebesgue's Dominated Convergence Theorem ensures the continuity of $(t,x)\mapsto h(t,x)$.

\smallskip

Similarly, as $x$ only appears on the integrand on $h$, on a polynomial transformation of the application (\ref{521}), it is clearly infinitely times differentiable respect to $x$. Moreover, once again Lebesgue's Dominated Convergence Theorem implies that all these derivatives respect to $x$ are continuous functions of  $(t,x)$ (both variables simultaneously).

\smallskip

Furthermore, we have an explicit formula for its first derivative (evaluated, as a linear transformation, on some vector $\varsigma\in \mathbb{R}^d$). 
\begin{align}\label{522}
 D_2h_j(t,x)\cdot \varsigma=&   \int_t^{\infty}D_x\left[ \Phi_j(t,s)\frac{1}{\k!}D_2^{\k}F_j(s,0)\cdot[\Phi_1(s,t)x_1]^{k_1}\cdots[\Phi_n(s,t)x_1]^{k_n}\right]\cdot \xi \,ds \nonumber \\
 \nonumber \\ 
 =&\sum_{i=1,\dots,n\,:\,k_i\geq 1}k_i\int_{t}^{\infty}\Phi_j(t,s)\frac{1}{\k!}D^{\k}_2F(s,0)[\Phi_1(s,t)x_1]^{k_1}\cdots\\
   \nonumber \\ 
 &\cdots[\Phi_i(s,t)\varsigma_i]\cdot[\Phi_i(s,t)x_i]^{k_i-1}\cdots[\Phi_n(s,t)x_1]^{k_n}\,ds. \nonumber
\end{align}

From this it is easily deduced that  $D_2h(t,0)=0$ for every $t\in \mathbb{R}$. Consider a multi index  $\m=(m_1,\dots,m_n)$. If  $m_i>k_i$ for some $i=1,\dots,n$, then $D_{2_i}^{m_i}h_j(t,x)\equiv 0$. Hence, by commutativity of the differential operators we obtain $D_2^\m h_j(t,x)\equiv 0$.

\smallskip

On the other hand, if $m_i<k_i$ for every $i=1,\dots,n$, then $D_2^\m h_j(t,0)= 0$. In other words, for any multi index $\m$ different than $\k$, we have $D_2^\m h_j(t,0)=0$.

\smallskip

\item \textit{Step 3: The partial derivative $D_1H(t,x)=D_1h(t,x)$ exists for every $x\in \mathbb{R}^d$ an almost all $t\in \mathbb{R}$. Moreover,  $D_1H:\mathbb{R}\times \mathbb{R}^d\to \mathbb{R}^d$ is a $C^\ell$-Carath\'eodory class function.}

\smallskip

The map $(t,x)\mapsto h_j(t,x)$ is derivable respect to $t$ on almost every $t\in \mathbb{R}$ and all $x\in \mathbb{R}^d$, since it is defined through a (locally) bounded integral. Indeed
\begin{align*}
    D_1h_j(t,x)=&-\Phi_j(t,t)\frac{1}{\k!}D_2^{\k}F_j(t,0)\cdot[\Phi_1(t,t)x_1]^{k_1}\cdots [\Phi_n(t,t)x_n]^{k_n}\\
    \\&+\int_t^{\infty}D_t\left[ \Phi_j(t,s)\frac{1}{\k!}D_2^{\k}F_j(s,0)\cdot[\Phi_1(s,t)x_1]^{k_1}\cdots[\Phi_n(s,t)x_1]^{k_n}\right]ds.
\end{align*}

Thus, using the identity $D_t\Phi_i(s,t)=-\Phi_i(s,t)A_i(t)$ we obtain
\begin{align}\label{523}
    D_1h_j(t,x)=&-\frac{1}{\k!}D_2^{\k}F_j(t,0)\cdot [x]^{\k}+A_j(t)h_j(t,x)\nonumber \\
    \nonumber \\&+\sum_{i=1,\dots,n\,:\,k_i\geq 1}k_i\int_{t}^{\infty}\Phi_j(t,s)\frac{1}{\k!}D^{\k}_2F(s,0)[\Phi_1(s,t)x_1]^{k_1}\cdots\\
    \nonumber\\&\cdots[-\Phi_i(s,t)A_i(t)x_i]\cdot[\Phi_i(s,t)x_i]^{k_i-1}\cdots[\Phi_n(s,t)x_1]^{k_n}\,ds,\nonumber
\end{align}
from where, as $h$ is infinitely times continuously differentiable respect to $x$, it is clear that the first two addends are $C^\ell-$Carath\'eodory. The differentiability of the third added respect to $x$ is analogous to that of $h$. 

\smallskip

Hence, $D_1H$ is a $C^\ell$-Carath\'eodory, since for every fixed $t$, every derivative respect to $x$ is continuous, and for every fixed $x$, said derivative is a measurable function of $t$ (recall that $t\mapsto A_i(t)$ is measurable, but might be discontinuous).

\smallskip

\item \textit{Step 4: There is a trumpet neighborhood of the trivial solution where for every $t\in \mathbb{R}$ the map $H(t,\cdot)$ defines a $C^\ell$-diffeomorphism. This diffeomorphism maps a trumpet neighborhood of the trivial solution into another trumpet neighborhood.}

\smallskip

It is enough to note that (\ref{522}) implies
\begin{equation}\label{502}
    \norm{D_2h(t,x)}\leq nK^{|\k|+1}\zeta(t)\norm{x}^{|\k|-1}.
\end{equation}

Define
$\xi(t)=\left(2K^{|\k|+1}n\zeta(t)\right)^{\frac{1}{1-|\k|}}$. Then, of the $\xi$-trumpet, we have $\norm{D_2h(t,x)}\leq \frac{1}{2}$. This implies that $\Id+D_2h(t,x)=D_2H(t,x)$ is invertible on the $\xi$-trumpet. By the inverse function Theorem the statement follows.

\smallskip

We define 
$$H(T_\xi)=\{H(t,x):(t,x)\in T_\xi\}=\{H(t,x):\norm{x}\leq \xi(t)\}.$$ 

Clearly, there is some $\wxi:\mathbb{R}\to \mathbb{R}^+$ such that $H(T_\xi)=T_\wxi$.  Moreover, as $H$ and $\xi$ are continuous, $\wxi$ is continuous as well.

\smallskip

\item \textit{Step 5: On these domains, for every fixed $t\in \mathbb{R}$, the maps $H(t,\cdot)$ and $H^{-1}(t,\cdot)$ are Lipschitz.}

\smallskip

From  (\ref{502}), it follows that for $x,\overline{x}\in \mathbb{R}^d$ with $\norm{x},\norm{\overline{x}}\leq \xi(t)$, we have
\begin{equation}\label{504}
  \norm{h(t,x)-h(t,\overline{x})}\leq\frac{1}{2} \norm{x-\overline{x}},  
\end{equation}
thus
$$\norm{H(t,x)-H(t,\overline{x})}\leq\frac{3}{2}\norm{x-\overline{x}}.$$

Now, for $y,\overline{y}\in \mathbb{R}^d$ with $\norm{y},\norm{\overline{y}}\leq \xi(t)$, we have
\begin{align*}
    \frac{1}{2}\norm{y-\overline{y}}=\norm{y-\overline{y}}-\frac{1}{2}\norm{y-\overline{y}}\leq \norm{y-\overline{y}}-\norm{h(t,y)-h(t,\overline{y})}\leq \norm{H(t,y)-H(t,\overline{y})},
\end{align*}
thus, for $x,\overline{x}\in T_\wxi$
\begin{equation}\label{503}
   \norm{H^{-1}(t,x)-H^{-1}(t,\overline{x})}\leq 2\norm{x-\overline{x}}. 
\end{equation}

\item  {\it Step 6: The map $H^{-1}:T_\wxi\subset \mathbb{R}\times \mathbb{R}^d\to \mathbb{R}^d$ is continuous (in both variables simultaneously) and $\ell$ times continuously differentiable respect to $x$.}

\smallskip

From (\ref{503}) we have
\begin{align*}
    \norm{H^{-1}(t,x)-H^{-1}(t_0,x_0)}&\leq \norm{H^{-1}(t,x)-H^{-1}(t,x_0)}+\norm{H^{-1}(t,x_0)-H^{-1}(t_0,x_0)}\\
    &\leq 2\norm{x-x_0}+\norm{H^{-1}(t,x_0)-H^{-1}(t_0,x_0)},
\end{align*}
thus, it is enough to prove that $\lim_{t\to t_0}H^{-1}(t,x_0)=H^{-1}(t_0,x_0)$. Consider the identity
\begin{equation}\label{524}
x_0=H(t,H^{-1}(t,x_0))=H^{-1}(t,x_0)+h(t,H^{-1}(t,x_0)),
\end{equation}
from where
\begin{equation}\label{525}
    H^{-1}(t,x_0)=x_0-h(t,H^{-1}(t,x_0)),
\end{equation}
hence, using (\ref{504}) we obtain
\begin{eqnarray*}
    \norm{H^{-1}(t,x_0)-H^{-1}(t_0,x_0)}&=&\norm{h(t,H^{-1}(t,x_0))-h(t_0,H^{-1}(t_0,x_0))}\\
    &\leq&\norm{h(t,H^{-1}(t,x_0))-h(t,H^{-1}(t_0,x_0))}\\
    &&+\norm{h(t,H^{-1}(t_0,x_0))-h(t_0,H^{-1}(t_0,x_0))}\\
    &\leq&\frac{1}{2}\norm{H^{-1}(t,x_0)-H^{-1}(t_0,x_0)}\\
    &&+\norm{h(t,H^{-1}(t_0,x_0))-h(t_0,H^{-1}(t_0,x_0))},
\end{eqnarray*}
from where, reorganizing terms we have 
$$\norm{H^{-1}(t,x_0)-H^{-1}(t_0,x_0)}\leq 2\,\norm{h(t,H^{-1}(t_0,x_0))-h(t_0,H^{-1}(t_0,x_0))},$$
thus, the continuity of $H^{-1}$ is deduced from the continuity of $h$ (Step 2).

\smallskip

Finally, as for every fixed $t$ the maps $H(t,\cdot)$ and $H^{-1}(t,\cdot)$ define $C^\ell$-diffeomorphisms (Step 4), then $H^{-1}$ is $\ell$ times differentiable respect to $x$. Consider a multi index $\m$ with $|\m|\leq \ell$. To conclude the continuity of  $(t,x)\mapsto D^\m_2H^{-1}(t,x)$ it is enough to see that it can be written as compositions and operations of $H^{-1}$, the inversion of linear transformations, and $D_2^{\widetilde{\m}} H$, with $|\widetilde{\m}|\leq |\m|$, all of them being continuous functions.

\smallskip

\item \textit{Step 7: If $\wmu$ is a solution of (\ref{516}) which lies on $T_\xi$, then $H(\cdot,\wmu(\cdot))$ is a solution of $\dot{x}=\widetilde{G}(t,x)$, where $\widetilde{G}:T_\xi\subset \mathbb{R}\times \mathbb{R}^d\to \mathbb{R}^d$ is given by}
\begin{eqnarray*}
    \widetilde{G}(s,x)&=&D_1H\left(s,H^{^{-1}}(s,x)\right)\\
    &&+D_2H\left(s,H^{-1}(s,x)\right)\cdot \left[A(s)H^{-1}(s,x)+F(s,H^{-1}(s,x))\right].
\end{eqnarray*}

Note that we can define such $\widetilde{G}$ because of Steps 2 and 3. Consider the function $s\mapsto \wnu(s):=H(s,\wmu(s))$. As $\wmu(s)=H^{-1}\left(s,H(s,\wmu(s))\right)$, then $\wmu(s)=H^{-1}(s,\nu(s))$. We have
\begin{eqnarray*}
    D_s[\wnu(s)]&=&D_s\left[H(s,\wmu(s))\right]\\
    &=&D_1H(s,\wmu(s))+D_2H(s,\wmu(s))\cdot D_s[\wmu(s)]\\
&=&D_1H(s,\wmu(s))+D_2H(s,\wmu(s))\cdot \left[A(s)\wmu(s)+F(s,\wmu(s))\right]\\
&=&D_1H\left(s,H^{-1}(s,\wnu(s))\right)\\
&&+D_2H\left(s,H^{-1}(s,\wnu(s))\right)\cdot \left[A(s)H^{-1}(s,\wnu(s))+F\left(s,H^{-1}(s,\wnu(s))\right)\right]\\
&=&\widetilde{G}(s,\wnu(s)).
\end{eqnarray*}

\item \textit{Step 8: If $\wnu$ is a solution of $\dot{x}=\widetilde{G}(t,x)$  which lies on $T_\wxi$, then $H^{-1}(\cdot,\wnu(\cdot))$ is a solution of (\ref{516}).}

\smallskip

Consider the function $s\mapsto \wmu(s):=H^{-1}(s,\wnu(s))$. As $\wnu(s)=H\left(s,H^{-1}(s,\wnu(s))\right)$, then $\wnu(s)=H(s,\wmu(s))$. We have
\begin{eqnarray*}
    D_s[\wmu(s)
]&=&D_s\left[H^{-1}(s,\wnu(s))\right]\\
   &=&D_1H^{-1}(s,\wnu(s))+D_2H^{-1}(s,\wnu(s))\cdot D_s[\wnu(s)]\\
    &=&D_1H^{-1}(s,\wnu(s))+D_2H^{-1}(s,\wnu(s))\cdot \widetilde{G}(s,\wnu(s))\\
    &=&D_1H^{-1}(s,\wnu(s))+D_2H^{-1}(s,\wnu(s))\cdot D_1H\left(s,H^{-1}(s,\wnu(s))\right)\\
    &&+D_2H^{-1}(s,\wnu(s))\cdot D_2H\left(s,H^{-1}(s,\wnu(s))\right) \\
    &&\cdot \left[A(s)H^{-1}(s,\wnu(s))+F\left(s,H^{-1}(s,\wnu(s))\right)\right]\\
    &=&D_1H^{-1}\left(s,H(s,\wmu(s))\right)+D_2H^{-1}\left(s,H(s,\wmu(s))\right)\cdot D_1H\left(s,\wmu(s)\right)\\
    &&+D_2H^{-1}(s,H(s,\wmu(s)))\cdot D_2H\left(s,\wmu(s))\right)\cdot \left[A(s)\wmu(s)+F\left(s,\wmu(s)\right)\right]\\
    &=&\left.D_t\left[H^{-1}(t,H(t,x))\right]\right|_{t=s,\,x=\wmu(s)}\\
    &&+\left.D_x\left[H^{-1}(s,H(s,x))\right]\right|_{x=\wmu(s)}\cdot \left[A(s)\wmu(s)+F\left(s,\wmu(s)\right)\right]\\
    &=&A(s)\wmu(s)+F\left(s,\wmu(s)\right).
\end{eqnarray*}

\smallskip

\item \textit{Step 9: The map $\widetilde{G}$ is $C^\ell$-Carath\'eodory. Moreover, there are maps $R_i:\Dom_{R_i}\subset \mathbb{R}\times \mathbb{R}^d\to \mathbb{R}^{d_i}$, with $R_i(t,0)=0$, such that
$$ \widetilde{G}_j(t,x)=A_j(t)x_j+F_j(t,x)-\frac{1}{\k!}D_2^{\k}F_j(t,0)\cdot [x]^{\k}+R_j(t,x),$$
and
$$\widetilde{G}_i(t,x)=A_i(t)x_i+F_i(t,x)+R_i(t,x), \qquad\forall\,i\neq j.$$
}

\smallskip

The first statement is a simple observation which follows from $D_1H$ being of $C^\ell$-Carath\'eodory class (Step 3), the maps $H$ and $H^{-1}$ are $\ell$ times continuously differentiable respect to $x$ (Steps 2 and 6), $F$ is $C^\ell$-Carath\'eodory and $A$ is measurable (by hypothesis). With this, and knowing that composing Carath\'eodory functions with measurable functions preserves measurability, the statement is obtained.

\smallskip
Using the identities (\ref{522}) and (\ref{523}) we obtain
$$D_1h_j(t,x)=-\frac{1}{\k!}D_2^{\k}F_j(t,0)\cdot [x]^{\k}+A_j(t)h_j(t,x)-D_2h_j(t,x)\cdot[A(t)x].$$

Now, the $j$-th component of $\widetilde{G}$ is
\begin{eqnarray*}
    \widetilde{G}_j(t,x)&=&D_1H_j\left(t,H^{^{-1}}(t,x)\right)\\
    &&+D_2H_j\left(t,H^{-1}(t,x)\right)\cdot \left[A(t)H^{-1}(t,x)+F(t,H^{-1}(t,x))\right],
\end{eqnarray*}
thus, combining these identities we have
\begin{eqnarray*}
    \widetilde{G}_j(t,x)&=&-\frac{1}{\k!}D_2^{\k}F_j(t,0)\cdot \left[H^{-1}(t,x)\right]^{\k}+A_j(t)h_j\left(t,H^{-1}(t,x)\right)\\
    &&-D_2h_j\left(t,H^{-1}(t,x)\right)\cdot\left[A(t)H^{-1}(t,x)\right]\\
    &&+\left[\Id_{\mathbb{R}^{d_j}}+ D_2h_j(t,H^{-1}(t,x))\right]\cdot \left[A(t)H^{-1}(t,x)+F(t,H^{-1}(t,x))\right]\\
     &=&-\frac{1}{\k!}D_2^{\k}F_j(t,0)\cdot \left[H^{-1}(t,x)\right]^{\k}+A_j(t)h_j\left(t,H^{-1}(t,x)\right)\\
    &&+D_2h_j\left(t,H^{-1}(t,x)\right)\cdot F(t,H^{-1}(t,x))\\
    &&+A_j(t)H_j^{-1}(t,x)+F_j(t,H^{-1}(t,x))\\
    &=&A_j(t)\left[H_j^{-1}(t,x)+h_j\left(t,H^{-1}(t,x)\right)\right]-\frac{1}{\k!}D_2^{\k}F_j(t,0)\cdot \left[H^{-1}(t,x)\right]^{\k}\\
    &&+D_2h_j\left(t,H^{-1}(t,x)\right)\cdot F(t,H^{-1}(t,x))+F_j(t,H^{-1}(t,x)),
\end{eqnarray*}
hence, using (\ref{524}) and defining $R_j:\Dom_{R_j}\subset\mathbb{R}\times\mathbb{R}^d\to \mathbb{R}^{d_j} $ by
\begin{align*}
  R_j(t,x)=&  D_2h_j\left(t,H^{-1}(t,x)\right)\cdot F(t,H^{-1}(t,x))+\frac{1}{\k!}D_2^{\k}F_j(t,0)\cdot \left([x]^\k-\left[H^{-1}(t,x)\right]^{\k}\right)\\
  &+F_j(t,H^{-1}(t,x))-F_j(t,x),
\end{align*}
we obtain
\begin{equation*} %\label{450}
    \widetilde{G}_j(t,x)=A_j(t)x_j+F_j(t,x)-\frac{1}{\k!}D_2^{\k}F_j(t,0)\cdot [x]^{\k}+R_j(t,x),
\end{equation*}
and $R_j(t,0)=0$.  Now, for other index $i\neq j$, we have $H_i(t,x)=x_i$, thus $D_1H_i(t,x)\equiv 0$ and $D_2H_i(t,x)=\Id_{\mathbb{R}^{d_i}}$, hence
\begin{eqnarray*}%\label{451}
    \widetilde{G}_i(t,x)&=&D_1H_i\left(t,H^{^{-1}}(t,x)\right) \nonumber\\
    &&+D_2H_i\left(t,H^{-1}(t,x)\right)\cdot \left[A(t)H^{-1}(t,x)+F(t,H^{-1}(t,x))\right]\nonumber \\
    &=&A_i(t)H_i^{-1}(t,x)+F_i(t,H^{-1}(t,x))\nonumber\\
&=&A_i(t)x_i+F_i(t,H^{-1}(t,x)),
\end{eqnarray*}
thus, defining $R_i:\Dom_{R_i}\subset\mathbb{R}\times\mathbb{R}^d\to \mathbb{R}^{d_i}$ by
$$R_i(t,x)=F_i(t,H^{-1}(t,x))-F_i(t,x),$$
we have $R_i(t,0)=0$ and
$$\widetilde{G}_i(t,x)=A_i(t)x_i+F_i(t,x)+R_i(t,x), \qquad\forall\,i\neq j.$$

\item {\it Step 10: Define $R=(R_1,\dots, R_n):\Dom_R\subset \mathbb{R}\times \mathbb{R}^d\to \mathbb{R}^d$. For every multi index $\m$ with $|\m|\leq |\k|$ we have $D_2^\m R(t,0)=0$ for almost all $t\in \mathbb{R}$.}

\smallskip

From (\ref{502}) we have
$$\norm{D_2h_j\left(t,H^{-1}(t,x)\right)}\leq nK^{|\k|+1}\zeta(t)\norm{H^{-1}(t,x)}^{|\k|-1}.$$

On the other hand, as $F(t,0)=0$ and $D_2F(t,0)=0$, there is a map $\widetilde{c}:\mathbb{R}\to \mathbb{R}^+$ such that for small enough $x$ we have
$$\norm{F(t,x)}\leq \widetilde{c}(t)\norm{x}^2,$$
for almost all $t\in \mathbb{R}$, with the only possible exceptions where $D_2^kF(t,x)$ is not defined.

\smallskip

From (\ref{503}) we have $\norm{H^{-1}(t,x)}\leq 2\norm{x}$. In conclusion
$$\norm{D_2h_j\left(t,H^{-1}(t,x)\right)\cdot F(t,H^{-1}(t,x))}\leq nK^{|\k|+1}\zeta(t)\,\widetilde{c}(t)\,2^{|\k|+1}\norm{x}^{|\k|+1},$$
hence
\begin{equation}\label{507}
    \lim_{x\to 0}\frac{\norm{D_2h_j\left(t,H^{-1}(t,x)\right)\cdot F(t,H^{-1}(t,x))}}{\norm{x}^{|\k|}}=0.
\end{equation}

Similarly, from (\ref{501}) we have
\begin{equation}\label{505}
    \norm{h(t,H^{-1}(t,x))}\leq \frac{K^{|\k|+1}}{\k!}\zeta(t)\,2^{|\k|+1}\norm{x}^{|\k|},
\end{equation}
which in conjunction with identity (\ref{525}) implies
\begin{equation}\label{506}
    \lim_{x\to 0}\frac{1}{\norm{x}^{|k|}}\norm{\frac{1}{\k!}D_2^{\k}F_j(t,0)\cdot \left([x]^\k-\left[H^{-1}(t,x)\right]^{\k}\right)}=0.
\end{equation}

Now, from (\ref{505}) we have that all derivatives of the map $x\mapsto h(t,H^{-1}(t,x))$ are zero in the origin, up until the order $|\k|-1$.  Once again from (\ref{525}) this implies
$$D_2^mH^{-1}(t,0)=0,\quad\forall\,m=2,\dots, |\k|-1,$$
which in conjunction with $D_2H^{-1}(t,0)=\Id$ and $D_2F(t,0)=0$ implies 
$$\left.D_x^m\left[F_j(t,H^{-1}(t,x))-F_j(t,x)\right]\right|_{x=0}=0,\quad\forall\,m=1,\dots, |\k|,$$
which with (\ref{507}) and (\ref{506}) complete the demonstration of the step.

\smallskip

\item {\it Step 11: Definition of the map $G$ and verification of (\ref{508}).}

\smallskip

It is enough to define $G=(G_1,\dots,G_n):\Dom_G\subset \mathbb{R}\times \mathbb{R}^d\to \mathbb{R}^d$  by
$$G_j(t,x)=F_j(t,x)-\frac{1}{\k!}D_2^{\k}F_j(t,0)\cdot [x]^{\k}+R_j(t,x),$$
and
$$G_i(t,x)=F_i(t,x)+R_i(t,x), \qquad\forall\,i\neq j.$$

Then, by Steps 9, 10 and the simple observation that the term  $\frac{1}{\k!}D_2^{\k}F_j(t,0)\cdot [x]^{\k}$ eliminates exactly the $\k$-th Taylor term on position $j$, the lemma follows.
\end{itemize}
\end{proof} 

A direct consequence of the preceding lemma is achieved by merely iterating it. 

\begin{corollary}
    Suppose conditions (H1) and (H2) hold.  Suppose as well that all conditions of nonresonance on every position hold for a certain order $k$. Then, there is a nonuniform $C^\ell$-local equivalence between (\ref{516}) and
    \begin{equation*}%\label{388}
        \dot{x}=A(t)x+G(t,x),
    \end{equation*}
    where $G=(G_1,\dots,G_n)$ is a $C^\ell$-Carath\'eodory function that eliminates all the Taylor terms of order $k$ on every position of $F$ and keeps all other Taylor terms fixed up to order $|\k|-1$, i.e., for a multi index $\m\in \mathbb{N}_0^n$:
    \begin{equation*}
        D^\m_2G_i(t,0)=\left\{ \begin{array}{lcc}
             D^\m_2F_i(t,0)&  \text{ for } &   |\m|\leq k, \\
             \\ 0 & \text{ for } & |\m|=k.
             \end{array}
             \right.
    \end{equation*}
\end{corollary}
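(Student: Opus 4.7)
The plan is to iterate Lemma~\ref{509} over all pairs consisting of a multi-index $\k\in\mathbb{N}_0^n$ with $|\k|=k$ and a position $j\in\{1,\dots,n\}$. Since the set of such multi-indices is finite (of cardinality $\binom{k+n-1}{n-1}$), so is the set of pairs. I would enumerate them in any order as $(\k_1,j_1),\dots,(\k_N,j_N)$, set $F^{(0)}:=F$, and recursively apply Lemma~\ref{509} to the system $\dot{x}=A(t)x+F^{(r-1)}(t,x)$ with respect to the pair $(\k_r,j_r)$, obtaining a new perturbation $F^{(r)}$ together with a nonuniform $C^\ell$-local equivalence $H^{(r)}$. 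By the conclusion of the lemma, the $(\k_r,j_r)$-th Taylor coefficient of $F^{(r)}$ at the origin vanishes while every other Taylor coefficient of order at most $|\k_r|=k$ agrees with that of $F^{(r-1)}$. In particular, coefficients of order strictly less than $k$ remain those of $F$ throughout the iteration, and any order-$k$ term eliminated at a previous step stays at zero.

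Hypotheses (H1) and the nonresonance assumption are trivially preserved at each step, since the linear part $A$ is untouched and therefore retains its spectrum, its block-diagonal decomposition, and all of its order-$k$ nonresonance conditions. The delicate point, and the main obstacle of the argument, is to verify that (H2) continues to hold for each intermediate $F^{(r)}$, so that Lemma~\ref{509} remains applicable. Concretely one must check that $\|D_2^m F^{(r)}(t,0)\|\leq C\psi(t)$ holds for every $m=2,\dots,\ell$, where the multiplicative constant $C$ can be absorbed into $\psi$ because $\mu$-admissibility is closed under positive rescaling. For $m\leq k$ the bound is immediate, since each such derivative is either inherited from $F$ or identically zero. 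For $k<m\leq\ell$ one must inspect the remainder $R$ constructed in Steps~9--10 of the lemma: its $m$-th derivatives at the origin expand into finite combinations of derivatives of $h$, of $H^{-1}$, and of $F$ at zero, each of which is controlled by $\psi(t)$ times a locally bounded continuous factor coming from the auxiliary function $\zeta$ of (\ref{510}).

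Once (H2) is confirmed at every step, the composition $H:=H^{(N)}\circ\cdots\circ H^{(1)}$, restricted to a sufficiently small common trumpet neighborhood obtained by intersecting and shrinking the domains of the factors, is itself a nonuniform $C^\ell$-local equivalence between (\ref{516}) and $\dot{x}=A(t)x+G(t,x)$ with $G:=F^{(N)}$. By construction, $G$ has every order-$k$ Taylor coefficient at every position equal to zero while every lower-order Taylor coefficient coincides with that of $F$, which is precisely the conclusion of the corollary.
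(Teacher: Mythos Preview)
Your overall plan---iterate Lemma~\ref{509} over all pairs $(\k,j)$ with $|\k|=k$ and compose the resulting equivalences---is precisely the paper's one-sentence argument. Where your write-up goes beyond the paper is in attempting to verify that (H2) persists for each intermediate $F^{(r)}$, and here your sketch has a gap. For $k<m\leq\ell$ you assert that $D_2^m R(t,0)$ is controlled by $\psi(t)$ times a ``locally bounded continuous factor'' coming from $\zeta$. But local boundedness does not suffice: $\mu$-admissibility is not preserved under multiplication by an unbounded continuous function, and in the nonuniform setting of (H2) the map $\zeta$ in (\ref{510}) is merely continuous, not globally bounded. This is exactly why the paper establishes the inherited-admissibility bound for $G$ only under the stronger uniform hypothesis (H2'), in Lemma~\ref{534}, where $\zeta$ does become bounded. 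So your argument, as written, does not confirm (H2) for $F^{(r)}$.

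The fix is simpler than the route you attempt. Inspecting the proof of Lemma~\ref{509}, the admissibility bound from (H2) is invoked only in Step~1, and only on the single derivative $D_2^{\k}F_j(\cdot,0)$ being eliminated. At stage $r$ one therefore needs just $\|D_2^{\k_r}F^{(r-1)}_{j_r}(t,0)\|\leq\psi(t)$. Since $|\k_r|=k$ and the lemma preserves every Taylor coefficient of order $\leq k$ except the one just removed, this derivative coincides with $D_2^{\k_r}F_{j_r}(t,0)$ for the original $F$, so the needed bound comes directly from (H2) for $F$ itself---a point you already noted for $m\leq k$. The remaining ingredients of (H2), namely $F^{(r)}(t,0)=D_2F^{(r)}(t,0)=0$ and the $C^\ell$-Carath\'eodory property, are delivered by the lemma's conclusion. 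No control on derivatives of $F^{(r)}$ of order above $k$ is required to continue the iteration.
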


\subsection{The uniformly admissible case}

Now we study what happens when we replace condition (H2) with (H2').

\begin{lemma}\label{534}
Suppose conditions (H1) and (H2') hold.  Suppose as well that the $\k$-th condition of nonresonance on position $j$ holds. Then, there is a uniform $C^\ell$-local equivalence between (\ref{516}) and
    \begin{equation*}%\label{388}
        \dot{x}=A(t)x+G(t,x),
    \end{equation*}
    where $G=(G_1,\dots,G_n)$ is a $C^\ell$-Carath\'eodory function that eliminates the $\k$-th Taylor term on position $j$ of $F$ and keeps all other Taylor terms fixed up to order $|\k|$, i.e., for any other multi index $\m\in \mathbb{N}_0^n$ with $1\leq |\m|\leq |\k|$:
    \begin{equation*}
        D^\m_2G_i(t,0)=\left\{ \begin{array}{lcc}
             D^\m_2F_i(t,0)&  \text{ for } &   \m\neq \k\,\, \quad\text{or}\quad i\neq j, \\
             \\ 0 & \text{ for } & \m=\k\quad\text{ and }\quad i= j.
             \end{array}
             \right.
    \end{equation*}

    Moreover, there is a uniformly $\mu$-admissible map $\widetilde{\psi}$ such that
    $$\norm{D_2^mG(t,0)}\leq \widetilde{\psi}(t),\quad\,\forall\,m=2,\dots,\ell\text{ and almost all }t\in \mathbb{R}.$$
\end{lemma}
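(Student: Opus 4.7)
The plan is to rerun the entire proof of Lemma \ref{509}, using (H2') only to upgrade the pointwise-in-$t$ estimates to uniform-in-$t$ estimates. Define $h$ by exactly the same integral formula as in Step 1, and set $H(t,x)=x+h(t,x)$. The pointwise bounds (\ref{501}) and (\ref{502}) remain valid; what changes is that the weight function
\[
\zeta(t)=\mu(t)^{\frac{1}{2}\dist(j,\k)}\zeta_{\psi,\mu,\frac{1}{2}\dist(j,\k)}^{+}(t)
\]
(and its counterpart with exponent $-\frac{1}{2}\dist(j,\k)$ in case (\ref{520})) is now \emph{bounded} on $\mathbb{R}$, by the uniform-admissibility characterization recorded just below the definition of uniformly $\mu$-admissible function. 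Writing $C_0=\sup_{t\in\mathbb{R}}\zeta(t)$, estimates (\ref{501})--(\ref{502}) upgrade to $\norm{h(t,x)}\leq C_1\norm{x}^{|\k|}$ and $\norm{D_2h(t,x)}\leq C_2\norm{x}^{|\k|-1}$ with $C_1,C_2$ independent of $t$.

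With these uniform bounds, Step 4 of Lemma \ref{509} can now be executed with a \emph{constant} radius $\xi(t)\equiv\varepsilon$, chosen so that $C_2\varepsilon^{|\k|-1}\leq \tfrac{1}{2}$; the image $H(T_\xi)$ is then contained in a genuine tubular neighborhood $T_{\wxi}$ with $\wxi\equiv\varepsilon'$ constant. Steps 2, 3 and 5--9 are pointwise-in-$t$ arguments and carry over verbatim. The uniform estimate $\norm{h(t,x)}\leq C_1\norm{x}^{|\k|}$, combined with the identity (\ref{525}) and the Lipschitz bound (\ref{503}), also gives
\[
\norm{H(t,x)-x}\leq C_1\norm{x}^{|\k|},\qquad \norm{H^{-1}(t,x)-x}\leq 2^{|\k|}C_1\norm{x}^{|\k|},
\]
so $H(t,x)\to 0$ and $H^{-1}(t,x)\to 0$ \emph{uniformly} in $t$ as $x\to 0$. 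Together with the constant radii, this is exactly what is needed to promote the equivalence from nonuniform to uniform.

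It remains to exhibit a uniformly $\mu$-admissible map $\widetilde{\psi}$ bounding $\norm{D_2^m G(t,0)}$ for $m=2,\ldots,\ell$. For $m\leq |\k|$, identity (\ref{508}) forces $D_2^m G_i(t,0)$ to equal either $D_2^m F_i(t,0)$ or $0$, hence it is bounded by $\psi(t)$. For $|\k|<m\leq \ell$, observe that $h$ is a homogeneous polynomial in $x$ of total degree exactly $|\k|$ (its integrand is linear to the power $k_i$ in each coordinate $x_i$), so $D_2^\m h(t,x)\equiv 0$ whenever $|\m|>|\k|$; combined with (\ref{525}), this determines $D_2^\m H^{-1}(t,0)$ by induction on $|\m|$ and yields uniformly bounded values. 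Applying the Fa\`a di Bruno formula to the explicit expressions for $R_i$ obtained in Step 9 then writes $D_2^m R_i(t,0)$ as a polynomial expression whose only $t$-dependent scalar factors are the derivatives $D_2^a F(t,0)$ with $2\leq a\leq m$, each bounded by $\psi(t)$. Hence $\norm{D_2^m G(t,0)}\leq C_m\psi(t)$, and setting $\widetilde{\psi}=\bigl(\max_{2\leq m\leq\ell} C_m\bigr)\psi$ works by the closure remark in the preliminaries.

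The main obstacle is this last step of bookkeeping: in the Fa\`a di Bruno expansion of the higher derivatives of $R_i$ one must verify that every $t$-dependent factor is either a derivative of $F$ at the origin (controlled by $\psi$) or one of the uniformly bounded quantities coming from (H1) together with the now-uniform bound on $\zeta$. Once the uniform boundedness of $\zeta$ is in place, the rest of the argument is a direct replay of Lemma \ref{509} with constant, rather than $t$-dependent, tube radii.
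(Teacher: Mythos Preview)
Your proposal is correct and follows the paper's overall strategy: rerun Lemma~\ref{509} and use (H2') only to make the weight $\zeta$ bounded, which converts the trumpet into a tube and gives the uniform convergence at the origin.

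The one genuine methodological difference is in how you control $D_2^m H^{-1}(t,0)$ for $|\k|<m\leq\ell$. The paper builds $H^{-1}$ explicitly as the Neumann-type series $\sum_{i\geq 0}(-h)^i$, writes $H^{-1}(t,x)=x-h(t,x)+\vartheta(t,x)$ with $\vartheta=\sum_{i\geq2}(-h)^i$, and shows $\vartheta$ vanishes to order $|\k|^2-1$; then it truncates the series at a level $p$ with $|\k|^p-1>\ell$ so that the tail contributes nothing to derivatives up to order $\ell$, and reads off uniform bounds from the finitely many iterates. You instead exploit directly that $h(t,\cdot)$ is a homogeneous polynomial of degree $|\k|$, so $D_2^\m h(t,0)=0$ for every $\m\neq\k$ and $D_2^\k h(t,0)$ is uniformly bounded (this is already implicit in Step~2 of Lemma~\ref{509}); differentiating the fixed-point identity~(\ref{525}) and applying Fa\`a~di~Bruno then gives $D_2^m H^{-1}(t,0)$ recursively in terms of $D_2^\k h(t,0)$ and lower-order $D_2^b H^{-1}(t,0)$, all uniformly bounded. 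Both routes reach the same conclusion; yours is shorter and avoids the series construction, while the paper's series gives the finer information that $H^{-1}-(x-h)$ vanishes to order $|\k|^2-1$, which is not needed here. Your final Fa\`a~di~Bruno accounting for $D_2^m R(t,0)$ is the same in spirit as the paper's Step~5.
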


\begin{proof}
The argumentation follows the same steps as the proof of Lemma \ref{509}, hence we use the same notation for the functions $h$, $H$ and others. There are only two aspects that require verification:
\begin{itemize}
    \item that the local $C^\ell$-equivalence is uniform. This is an immediate consequence of (H2'), since by definition, now the map $\zeta$ defined on (\ref{510}) is bounded,
    \item that all derivatives of $G$ (including orders $|\k|+1,\dots,\ell$) are dominated by a uniformly $\mu$-admissible map on the origin. 
\end{itemize}

The second part requires a more developed argument. We carry it out in several steps.
\begin{itemize}
    \item {\it Step 1: Rectification of estimations.}

\smallskip

This is a simple observation. Let us call $M=\sup_{t\in \mathbb{R}}\zeta(t)$, where $\zeta$ is defined on (\ref{510}). From (H2'), we have $M<\infty$. Moreover, from (\ref{501}) we obtain
\begin{equation}\label{511}
    \norm{h(t,x)}\leq \frac{K^{|\k|+1}M}{\k!}\norm{x}^{|\k|}.
\end{equation}

We deduced at the end of Step 3 of the proof of Lemma \ref{509} that
$$D^m_2h(t,0)=0,\quad\forall\,m\neq |\k|,\,\text{ and all }t\in \mathbb{R},$$
and now we have
\begin{equation}\label{512}
    \norm{D^{|\k|}_2h(t,0)}=\norm{D^{|\k|}_2h(t,0)}\leq n^{|\k|}M,\quad\forall\,t\in \mathbb{R}.
\end{equation}

Furthermore, from (\ref{502}) we have
$$\norm{D_2h(t,x)}\leq nK^{|\k|+1}M\norm{x}^{|\k|-1}\,.$$

\smallskip

\item {\it Step 2: There is a tubular neighborhood of the origin which is invariant under $h$. }

\smallskip

Consider $\rho>0$. If $\norm{x}\leq \rho$, by (\ref{511}) we have 
$$\norm{h(t,x)}\leq \frac{K^{|\k|+1}M\rho^{|\k|}}{\k!},$$
thus, if we take 
$$\rho\leq \min\left\{\frac{1}{2},\left(\frac{\k!}{K^{|\k|+1}M}\right)^{\frac{1}{|\k|-1}},\frac{1}{2}\left(\frac{1}{nK^{\k|+1}M}\right)^{\frac{1}{|\k|-1}}\right\},$$
we obtain 
$$\norm{x}\leq \rho\Rightarrow \norm{h(t,x)}\leq \rho,\quad \norm{D_2h(t,x)}\leq \frac{1}{2}\quad\text{and}\quad\norm{h(t,x)}\leq \frac{1}{2}\norm{x}.$$

Now we replace the trumpet neighborhood of Step 4 of the proof of Lemma \ref{509} with this tubular neighborhood. After that we follow Steps 5-11 from the proof of Lemma \ref{509} considering this replacement.

\smallskip

    \item {\it Step 3: There is a continuous map $\vartheta:\Dom_\vartheta\subset \mathbb{R}\times \mathbb{R}^d\to \mathbb{R}^d$ such that
    $$H^{-1}(t,x)=x-h(t,x)+\vartheta(t,x),$$
    and
    \begin{equation}\label{526}
    \lim_{x\to 0}\frac{\norm{\vartheta(t,x)}}{\norm{x}^{|\k|^2-1}}=0\,,\quad \text{ uniformly on }t\in \mathbb{R}.
\end{equation}}

\smallskip

If $\norm{x}\leq \rho$, we can recursively we define the iterations of $(-h)$ by 
$$(-h)^0(t,x)=x\qquad\text{ and }\qquad(-h)^{i+1}(t,x)=-h(t,(-h)^i(t,x)),$$ 
and it is easily followed that $\norm{(-h)^i(t,x)}\leq 2^{-i}\norm{x}$, hence its geometric series converges absolutely. In other words, the series
$$\sum_{i=0}^\infty (-h)^i(t,x)=:\Tilde{h}(t,x),$$
converges absolutely to an element $\tilde{h}(t,x)$, whose norm is not greater than $2\norm{x}$. Moreover, if $\norm{x}\leq \rho$ we have:
\begin{align*}
    [\Id+h(t,\cdot)]\circ [\Tilde{h}(t,\cdot)](x)=x=[\tilde{h}(t,\cdot)]\circ[\Id+h(t,\cdot)](x),
\end{align*}
which implies $H^{-1}(t,x)=\Tilde{h}(t,x)$ for every $t\in \mathbb{R}$. Now, if we define $\vartheta:\Dom_\vartheta\subset \mathbb{R}\times \mathbb{R}^d\to \mathbb{R}^d$ by
$$\vartheta(t,x)=\sum_{i=2}^\infty (-h)^i(t,x),$$
we can write
\begin{equation}\label{527}
    H^{-1}(t,x)=x-h(t,x)+\vartheta(t,x),
\end{equation}
which clearly shows that $\vartheta$ is a continuous function. Now, from (\ref{511}) we have
\begin{align*}
    \norm{(-h)^i(t,x)}&\leq \frac{MK^{|\k|+1}}{\k!} \norm{(-h)^{i-1}(t,x)}^{|\k|}\\
   \\ &\leq \left[\frac{MK^{|\k|+1}}{\k!}\right]^{|\k|+1} \norm{(-h)^{i-2}(t,x)}^{|\k|^2}\\
    \\&\leq \left[\frac{MK^{|\k|+1}}{\k!}\right]^{|\k|+1} \left(\frac{1}{2}\right)^{(i-2)|\k|^2}\norm{x}^{|\k|^2},
\end{align*}
hence
\begin{equation*}%\label{406}
    \norm{\vartheta(t,x)}\leq \left[\frac{MK^{|\k|+1}}{\k!}\right]^{|\k|+1}\frac{1}{1-\left(\frac{1}{2}\right)^{|\k|^2}}\norm{x}^{|\k|^2},
\end{equation*}
which implies (\ref{526}).

\smallskip

\item {\it Step 4: For $m=2,\dots, |\k|-1$ we have $D_2^mH^{-1}(t,0)=0$, $\norm{D^{|\k|}_2H^{-1}(t,0)}\leq n^{|\k|}M$ and for $m=|\k|+1,\dots,\ell$ there is a constant $\widetilde{M}>0$ such that $\norm{D^{m}_2H^{-1}(t,0)}\leq \widetilde{M}.$}

\smallskip

The first two statements follow trivially from (\ref{527}), (\ref{526}) and (\ref{512}). Now, for $m=|\k|+1,\dots,\ell$ it is clear that $D^{m}_2H^{-1}(t,0)=D^m_2\vartheta(t,0)$.

\smallskip

Chose now $p\in \mathbb{N}$ such that $|\k|^p-1>\ell$. We have
$$\vartheta(t,x)=\sum_{i=2}^{p-1} (-h)^i(t,x)+\sum_{i=p}^\infty (-h)^i(t,x).$$

In the same fashion as we proved (\ref{526}), we have 
$$\lim_{x\to 0}\frac{1}{\norm{x}^\ell}\norm{\sum_{i=p}^\infty (-h)^i(t,x)}=0,$$
thus, denoting $\widetilde{\vartheta}(t,x)=\sum_{i=2}^{p-1} (-h)^i(t,x)$, we have  $D^{m}_2\vartheta(t,0)=D^m_2\widetilde{\vartheta}(t,0)$ for $m=|\k|+1,\dots,\ell$.  As $\widetilde{\theta}$ is a finite sum of finite compositions of $h$, which has all of its derivatives bounded in the origin, there is some $\widetilde{M}>0$ such that 
$$\norm{D^{m}_2H^{-1}(t,0)}=\norm{D^m_2\widetilde{\vartheta}(t,0)}\leq \widetilde{M},\quad\forall\,m=|\k|+1,\dots,\ell.$$

\item {\it Step 5: There is a uniformly $\mu$-admissible map $\widehat{\psi}$ such that
    $$\norm{D_2^mR(t,0)}\leq \widehat{\psi}(t),\quad\,\forall\,m=2,\dots,\ell\text{ and almost all }t\in \mathbb{R},$$
    where $R$ is the map defined on Step 10 of the proof of Lemma \ref{509}.}

\smallskip

By the previous step, the maps
$$x\mapsto D_2h_j\left(t,H^{-1}(t,x)\right)
\quad\text{and}\quad x\mapsto [x]^\k-\left[H^{-1}(t,x)\right]^{\k}\,,$$
have all their derivatives of order between $2$ and $\ell$ uniformly bounded on the origin. Then, by the definition of $R$, all of its derivatives of order between $2$ and $\ell$ are bounded by a positive linear combination of the derivatives of $F$, which are dominated by a uniformly $\mu$-admissible function by hypothesis (H2'). 
\end{itemize}

By the definition of $G$ given on Step 11 of the proof of Lemma \ref{509}, we complete this demonstration.
    
\end{proof}

We conclude this section by presenting an immediate result achieved through the repetition of the preceding lemma. The statement presented is the Theorem of Normal Forms for systems that exhibit $\mu$-dichotomies.

\begin{theorem}
Suppose conditions (H1), (H2') and (H3) hold. Then, there is a uniform $C^\ell$-local equivalence between (\ref{516}) and
    \begin{equation*}%\label{388}
        \dot{x}=A(t)x+G(t,x),
    \end{equation*}
    where $G=(G_1,\dots,G_n)$ is a $C^\ell$-Carath\'eodory function that eliminates all Taylor terms in the origin up to order $\ell$,  i.e.,
    $$D_2^mG(t,0)=0,\quad\,\forall \,m=2,\dots,\ell, \text{ and almost all }t\in \mathbb{R}.$$
\end{theorem}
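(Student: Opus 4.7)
The plan is to iterate Lemma \ref{534} over the finite collection of pairs $(\k, j)$ with $2 \le |\k| \le \ell$ and $j \in \{1,\dots,n\}$. Hypothesis (H3) guarantees that every nonresonance condition required by Lemma \ref{534} holds, so each individual elimination step is licit. The critical structural feature, inherited from the conclusion (\ref{508}) of Lemma \ref{509}, is that eliminating the $\k$-th Taylor term at position $j$ leaves all Taylor coefficients at the origin of strictly lower order untouched, and among those of order exactly $|\k|$ modifies only the one at $(j,\k)$, sending it to zero. Hence, if I enumerate the pairs $(\k, j)$ so that all those with $|\k|=2$ come first, then those with $|\k|=3$, and so on up to $|\k| = \ell$, once a homogeneous layer of order $m$ has been annihilated, no subsequent elimination can recreate any Taylor coefficient in that layer.

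The next step is to check that (H1) and (H2') persist throughout the iteration. Hypothesis (H1) is preserved trivially because the near-identity transformations $H(t,x) = x + h(t,x)$ supplied by Lemma \ref{534} do not alter $A(t)$; the block-diagonal structure and spectrum are unchanged. For (H2'), the last assertion of Lemma \ref{534} explicitly provides a uniformly $\mu$-admissible function $\widetilde{\psi}$ dominating the derivatives $D_2^m G(t,0)$ of the new nonlinearity for $m = 2,\dots,\ell$, so the next application of the lemma sees again a system satisfying (H1) and (H2'), possibly with a different admissible majorant. Since uniformly $\mu$-admissible functions are closed under positive linear combinations (as noted immediately after their definition), these bounds remain uniformly $\mu$-admissible across the finitely many iterations.

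Finally, I would verify that composing the finitely many intermediate near-identity maps yields a single uniform $C^\ell$-local equivalence. The Lipschitz bounds (\ref{504})--(\ref{503}) and, crucially, the tubular neighborhood constructed in Step 2 of the proof of Lemma \ref{534} (whose radius depends only on the uniform majorant $M$ of $\zeta$) compose well: at each step one shrinks the tubular radii so that the image of the current transformation lies inside the domain of validity of the next, which is possible because each intermediate $H$ is $O(\norm{x}^{|\k|})$-close to the identity uniformly in $t$. The resulting $H$ is a composition of finitely many uniform $C^\ell$-diffeomorphisms on tubular domains, hence itself a uniform $C^\ell$-local equivalence in the sense of the definition of Section~2. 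The anticipated main obstacle is precisely this bookkeeping of nested tubular neighborhoods and admissible majorants; the mathematical content reduces to checking that each intermediate system satisfies the input conditions of Lemma \ref{534}, which the preservation statements above have already secured.
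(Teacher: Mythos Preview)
Your proposal is correct and follows exactly the approach the paper takes: the authors state only that the theorem ``is an immediate result achieved through the repetition of the preceding lemma,'' and you have spelled out precisely that iteration---ordering the pairs $(\k,j)$ by increasing $|\k|$, invoking (H3) for each nonresonance condition, using the last clause of Lemma~\ref{534} to propagate (H2'), and composing the resulting tubular near-identity maps. Your bookkeeping of preserved Taylor layers via (\ref{508}) and of nested tubular neighborhoods is more explicit than what the paper writes, but the strategy is identical.
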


\section{Remarks on the nonuniform case}

We finish this work by giving guidelines on how a result of elimination of nonresonant terms could work for the case where the linear part admits only a nonuniform bounded growth, as well as some of the difficulties that arise. The first of these is that, unlike the uniform case discussed on Remark \ref{533}, a conjugation by a nonuniform Lyapunov function does not preserve condition (H2) in general.

\smallskip

Assume nevertheless that we have a setting where the linear part is block diagonalized and (H2) is verified. Let $\epsilon>0$. For each spectral interval $\lambda_i=[a_i,b_i]$ we choose two numbers $\a_i=\a_i(\epsilon)$ and $\b_i=\b_i(\epsilon)$  with
$$a_i-\epsilon\leq \a_i<a_i\qquad \text{and}\qquad b_i<\b_i\leq b_i+\epsilon.$$

Similarly as before, as $\Sigma_\NmuD(A_i)=\lambda_i=[a_i,b_i]$, from the simple fact that $\b_i>b_i$ and $\a_i<a_i$ it follows that exists constants $K=K(\epsilon)\geq 1$ and 
\begin{equation}\label{528}
\alpha_i=\alpha_i(\epsilon)<0, \quad\beta_i=\beta_i(\epsilon)>0, \quad\theta_i=\theta_i(\epsilon),\quad\nu_i=\nu_i(\epsilon)\geq 0,   
\end{equation}
with 
\begin{equation}\label{529}
  \alpha_i+\theta_i<0\quad\text{ and }\quad\beta_i-\nu_i>0 , 
\end{equation}
such that for all $i=1,\dots,n$ the following estimations follow 
\begin{align*}
    \norm{\Phi_i(t,s)\left(\frac{\mu(t)}{\mu(s)}\right)^{-\b_i}}\leq K\left(\frac{\mu(t)}{\mu(s)}\right)^{\alpha_i}\mu(s)^{\sgn(s)\theta_i}\,,&\quad \text{ for all }t\geq s,\\
    \norm{\Phi_i(t,s)\left(\frac{\mu(t)}{\mu(s)}\right)^{-\a_i}}\leq K\left(\frac{\mu(t)}{\mu(s)}\right)^{\beta_i}\mu(s)^{\sgn(s)\nu_i}\,,&\quad \text{ for all }t\leq s,
\end{align*}
thus
\begin{align*}
    \norm{\Phi_i(t,s)}\leq K\left(\frac{\mu(t)}{\mu(s)}\right)^{\alpha_i+\b_i}\mu(s)^{\sgn(s)\theta_i}\,,&\quad \text{ for all }t\geq s,\\
    \norm{\Phi_i(t,s)}\leq K\left(\frac{\mu(t)}{\mu(s)}\right)^{\beta_i+\a_i}\mu(s)^{\sgn(s)\nu_i}\,,&\quad \text{ for all }t\leq s.
\end{align*}

Allow us to introduce $\eta^{+}(\epsilon)=\eta^+:\mathbb{R}\to \mathbb{R}$ by
$$\eta^+(t)=\sup_{s\geq t}\left(\frac{\mu(t)}{\mu(s)}\right)^{\beta_j-\left[k_1\alpha_1+\cdots+ k_n\alpha_n\right]}\cdot \mu(s)^{\sgn(s)\nu_j}\cdot\mu(t)^{\sgn(t)(k_1\theta_1+\cdots+k_n\theta_n)},$$
and $\eta^{-}(\epsilon)=\eta^-:\mathbb{R}\to \mathbb{R}$ by
$$\eta^-(t)=\sup_{s\leq t}\left(\frac{\mu(t)}{\mu(s)}\right)^{\alpha_j-\left[k_1\beta_1+\cdots+ k_n\beta_n\right]}\cdot \mu(s)^{\sgn(s)\theta_j}\cdot\mu(t)^{\sgn(t)(k_1\nu_1+\cdots+k_n\nu_n)}.$$

From (\ref{528}) and (\ref{529}) it follows that both $\eta^+$ and $\eta^-$ are well defined and continuous. However, unlike the uniform case discussed on the previous section, we cannot just estimate these functions by a constant. On the other hand,  other than (\ref{528}) and (\ref{529}), we do not know much about the chosen constants in general, hence we ignore if $\eta^+$ or $\eta^-$ are increasing, decreasing or have some other special properties.

\smallskip

Following the same argument as in the previous section, we can show for $t\leq s$ 
\begin{align*}
\norm{\Phi_j(t,s)\frac{1}{\k!}D_x^{\k}F_j(s,0)\cdot[\Phi(s,t)x]^{\k}}\leq &\frac{\psi(s)K^{|\k|+1}}{\k!}\norm{x_1}^{k_1}\cdots \norm{x_n}^{k_n}\\
&\cdot\left(\frac{\mu(t)}{\mu(s)}\right)^{\beta_j+\a_j-\left[k_1(\alpha_1+\b_1)+\cdots+ k_n(\alpha_n+\b_n)\right]}\\
&\cdot \mu(s)^{\sgn(s)\nu_j}\cdot\mu(t)^{\sgn(t)(k_1\theta_1+\cdots+k_n\theta_n)}\\
\leq&\frac{\psi(s)  \eta^+(t) K^{|\k|+1}}{\k!}\norm{x_1}^{k_1}\cdots \norm{x_n}^{k_n}\\
&\cdot\left(\frac{\mu(t)}{\mu(s)}\right)^{a_j-\left[k_1b_1+\cdots+ k_nb_n\right]-(|\k|+1)\epsilon},
\end{align*}
and for $t\geq s$ we have
\begin{align*}
    \norm{\Phi_j(t,s)\frac{1}{\k!}D_x^{\k}F_j(s,0)\cdot[\Phi(s,t)x]^{\k}}\leq& \frac{\psi(s)K^{|\k|+1}}{\k!}\norm{x_1}^{k_1}\cdots \norm{x_n}^{k_n}\\
    &\cdot\left(\frac{\mu(t)}{\mu(s)}\right)^{\alpha_j+\b_j-\left[k_1(\beta_1+\a_1)+\cdots+ k_n(\beta_n+\a_n)\right]}\\
&\cdot \mu(s)^{\sgn(s)\theta_j}\cdot\mu(t)^{\sgn(t)(k_1\nu_1+\cdots+k_n\nu_n)}\\
\leq&\frac{\psi(s)  \eta^-(t) K^{|\k|+1}}{\k!}\norm{x_1}^{k_1}\cdots \norm{x_n}^{k_n}\\
&\cdot\left(\frac{\mu(t)}{\mu(s)}\right)^{b_j-\left[k_1a_1+\cdots+ k_na_n\right]+(|\k|+1)\epsilon}.
\end{align*}

Thus, if we choose $\epsilon$ to be small enough, we can once again define the map $h$ and obtain
    \begin{equation*}
 \norm{h(t,x)}\leq\left\{ \begin{array}{lcc}
            
             \dfrac{K^{|\k|+1}}{\k!} \eta^+(t) \mu(t)^{\frac{1}{2}\dist(j,\k)}\zeta_{\psi,\mu,\frac{1}{2}\dist(j,\k)}^+(t)\norm{x_1}^{k_1}\cdots\norm{x_n}^{k_n}&  \text{ if  (\ref{519}) holds},\\
             \\\dfrac{K^{|\k|+1}}{\k!}\eta^-(t)\mu(t)^{-\frac{1}{2}\dist(j,\k)}\zeta_{\psi,\mu,\frac{1}{2}\dist(j,\k)}^-(t)\norm{x_1}^{k_1}\cdots\norm{x_n}^{k_n}&  \text{ if  (\ref{520}) holds}.
             \end{array}
             \right.
\end{equation*}

Now we can proceed as in the proof of Lemma \ref{509}, following Steps 2-11.  Nevertheless, by the rather uncontrollable nature of the maps $\eta^+$ and $\eta^-$, we do not know if the trumpet neighborhoods are significantly reduced. On the other hand, in this context the uniformly admissible case is not able to turn these trumpets into tubular neighborhoods, unlike the framework discussed in Lemma \ref{534}.


\begin{thebibliography}{99}

\bibitem{Arnold} Arnold, V. I. Geometrical Methods in the Theory of Ordinary Differential Equations, A Series of Comprehensive Studies in Mathematics 250, Springer-Verlag, New York,
1983.

\bibitem{Dragicevic3}
Backes, L.; Dragi\v{c}evi\'c, D. Smooth linearization of nonautonomous dynamics under polynomial behaviour. Preprint, arXiv:2210.04804.

%\bibitem{Barreira2}
%Barreira, L.; Valls, C. A Grobman-Hartman theorem for nonuniformly hyperbolic dynamics. J. Differential Equations 228 (2006), 285–310.

%\bibitem{Barreira3}
%Barreira, L.; Fan, M.; Valls, C; Zhang, J. Robustness of nonuniform polynomial dichotomies for difference equations. Topological Methods in Nonlinear Analysis, 37 (2011), 357–376.

\bibitem{Chu}
Chu, J.; Liao, F-F.; Siegmund, S.; Xia, Y.; Zhang, W. Nonuniform dichotomy spectrum and reducibility for nonautonomous equations. Bull. Sci. Math.139 (2015), 538–557.

%\bibitem{Chu2}
%Chu, J.; Liao, F.; Siegmund, S.; Xia, Y.; Zhu, H. Nonuniform dichotomy spectrum and reducibility for nonautonomous difference equations. Adv. Nonlinear Anal. 11 (2022), no. 1, 369–384.

\bibitem{Cuong}
Cuong, L. V.; Doan, T. S.; Siegmund, S. A Sternberg theorem for nonautonomous differential equations. J. Dynam. Differential Equations 31 (2019), 1279–1299.

%\bibitem{Dragicevic}
%Dragi\v{c}evi\'c, D.; Zhang, W., Zhang, W. Smooth linearization of nonautonomous difference equations with a nonuniform dichotomy. Math. Z. 292 (2019), 1175–1193.

\bibitem{Dragicevic2}
Dragi\v{c}evi\'c, D.; Zhang, W., Zhang, W.
Smooth linearization of nonautonomous differential equations with a nonuniform dichotomy. Proc. Lond. Math. Soc. 121 (2020), 32--50.


\bibitem{Dragicevic5}
Dragi\v{c}evi\'c, D. Admissibility and nonuniform polynomial dichotomies, Math. Nachr. 293
(2020), 226–243.

\bibitem{Dragicevic6}
Dragi\v{c}evi\'c, D.;  Sasu, A. L.; Sasu, B. On polynomial dichotomies of discrete nonautonomous
systems on the half-line, Carpathian J. Math. 38 (2022), 663–680

\bibitem{Grushin} 
Grushin, V. Pseudo-differential Operators in $\mathbb{R}^n$ with Bounded Symbols, Funct. Anal. Appl. 4 (1970)., 202--212. 

\bibitem{Kato}
Kato, T.; Miyachi, A.; Tomita, N. Boundedness of multilinear pseudo-differential operators with symbols in the H\"ormander class $S_{0,0}$. J. Funct. Anal.282 (2022), no.4, Paper No. 109329, 28 pp.



\bibitem{Perron}
Perron, O. Die Stabilitätsfrage bei Differentialgleichungen. Math. Z. 32 (1930), no. 1, 703–728.

\bibitem{Poincare}
Poincar\'e, H. Mémoire sur les courbes définies par une équation différentielle, Thesis 5 (1879); Also Oeuvres I, Gauthier---Villars, Paris, 1928, pp. 59--129.

\bibitem{Sacker}
 Sacker, R.J.; Sell, G.R. 
A Spectral Theory for Linear Differential Systems,
J. Differential Equations 27 (1978),320--358.

\bibitem{Silva}
Silva, C. M. . Nonuniform $\mu$-dichotomy spectrum and kinematic similarity. J. Differential Equations, 375, (2023), 618-652.


\bibitem{Siegmund}
Siegmund, S. Dichotomy spectrum for nonautonomous differential equations. J. Dynam. Differential Equations 14 (2002), 243–258

\bibitem{Siegmund2}
Siegmund, S. Reducibility of nonautonomous linear differential equations. J. London Math. Soc. (2) 65 (2002), 397–410.

\bibitem{Siegmund3}
Siegmund, S. Normal forms for nonautonomous differential equations. J. Differential Equations 178 (2002), 541–573.

\bibitem{Sternberg1}
Sternberg, S. Local contractions and a theorem of Poincar\'e. Am. J. Math. 79 (1957), 809-824.

\bibitem{Sternberg2}
Sternberg, S. On the structure of local homeomorphisms of Euclidian $n-$space, I. Am. J. Math. 80 (1958), 623-631.

\bibitem{Stolovitch} Stolovitch, L. Progress in normal form theory. Nonlinearity 22 (2009), R77--R99.

\bibitem{Wiggins} Wiggins, S. Introduction to Applied Nonlinear Dynamical Systems and Chaos, Texts in Applied Mathematics 2, Springer-Verlag, New York, 1990.
%\bibitem{Kowalevskaya}
%van Kowalevskaya, S.  Zur Theorie der partiellen Differentialgleichungen, J. Reine Angew. Math., 80 (1) (1875) 1-32.



\bibitem{Zhang} Zhang, X.
Nonuniform dichotomy spectrum and normal forms for nonautonomous differential systems, J. Funct. Anal. 267 (2014), 1889--1916.
\end{thebibliography}
\end{document}